\documentclass[english]{amsart}
\usepackage{babel}
\usepackage{dsfont}
\usepackage{amsmath,amsthm,amssymb,babel,amsfonts,graphics}
\usepackage{caption}
\usepackage{subcaption}
\usepackage{tikz}
\usepackage{esint}
\usepackage[pdftex,colorlinks]{hyperref}
\usepackage{hyperref}
\usepackage{pgfplots}
\pgfplotsset{compat=1.16}

\title[Consistency of tug-of-war type operators]{Consistency of tug-of-war type operators on random data clouds}

\author[Han]{Jeongmin Han}
\address{Jeogmin Han: Department of Mathematics, Soongsil University, 
06978 Seoul, Republic of Korea}
\email{jeongmin.han@ssu.ac.kr}

\author[Liu]{Huajie Liu}
\address{Huajie Liu: Department of Mathematics and Statistics, University of Jyv\"{a}skyl\"{a}, P.O. Box 35, FI-40014 Jyv\"{a}skyl\"{a}, Finland \\
and Department of Mathematics, Nankai University, 300071 Tianjin, People's Republic of China}
\email{1120220031@mail.nankai.edu.cn}


\date{July 24, 2025}

\subjclass[2020]{35J15, 35B65, 35J92, 68T05, 91A50}
\keywords{Tug-of-war, $p$-Laplacian, stochastic games, graph-based learning}

\begin{document}

\maketitle

\theoremstyle{definition}
	\newtheorem{theorem}{Theorem}[section]
	\newtheorem{lemma}[theorem]{Lemma}
	\newtheorem{definition}[theorem]{Definition}
	\newtheorem{proposition}[theorem]{Proposition}
    \newtheorem{corollary}[theorem]{Corollary}
\theoremstyle{remark*}
    \newtheorem*{remark}{Remark}
\begin{abstract}
    In this paper, we study a tug-of-war type operator on geometric graphs and its associated Dirichlet problem on a random data cloud.
    Specifically, we analyze the convergence of the value functions as the number of data points increases and the step size of the game shrinks. This analysis reveals the connection between our tug-of-war type operator and the corresponding model problem.
    A key ingredient in establishing this result is the consistency of the operator. 
\end{abstract}

\tableofcontents

\section{Introduction}
In this paper, we investigate a class of discrete dynamic programming equations on random geometric graphs, denoted by $
\{ \mathcal{X}_n \}$, in $\mathbb{R}^N$:
\begin{align} \label{gradpp} \begin{split}
 u(x)=&\frac{1}{\epsilon^2}\biggl( \frac{p-2}{2(N+p)}\left\{ 
{\inf_{ Z_i\in \{B_{\epsilon}(x)\cap \mathcal{X}_n\} }  u(Z_i) +\sup_{ Z_j\in \{B_{\epsilon}(x)\cap \mathcal{X}_n\} }u(Z_j)} \right\}\\ & +
\frac{N+2}{N+p} \sum_{ Z_i\in \{B_{\epsilon}(x)\cap \mathcal{X}_n\} } \frac{u(Z_i)}{ |{ \{B_{\epsilon}(x)\cap \mathcal{X}_n\} }|} \biggr)-f. \end{split}
\end{align}
where $f\in L^{\infty}(\mathcal{X}_n)$. Such graphs arise in graph-based machine learning, constructed from random data points sampled from a distribution supported on a bounded domain in $\mathbb{R}^N$ (cf. \cite{Cal,ABP24}). Our focus is to analyze the behavior of the solutions to \eqref{gradpp} as the graph becomes dense.

In PDE theory, the mean value property leads to numerous important and insightful consequences for the Laplace equation. 
Moreover, it provides valuable intuition about the local behavior of harmonic functions. 
This naturally motivates the extension of the mean value property to broader classes of PDEs, offering new characterizations of their solutions. 
In particular, this perspective reveals a deep connection between stochastic game theory and PDEs, enriching our understanding of solutions through probabilistic and game-theoretic interpretations.

A notable example in this context is tug-of-war, which is a stochastic game first introduced by Peres, Schramm, Sheffield, and Wilson in the mid-2000s.
This game provides an interpretation of the infinity Laplacian, as shown in \cite{Peres2008}. 
Over the past two decades, there has been active research on this class of stochastic games, leading to a rapid development of the corresponding theoretical framework.
In many cases, the study of tug-of-war games begins with an investigation of the corresponding mean value characterization.
Mean value characterizations for the $p$-Laplace type operators have been developed, such as in \cite{MR2566554,MR2684311,MR2875296}, describing the local behavior of the solutions to these model equations. 
On the other hand, one can find interesting and important results for the value functions of tug-of-war games. 
For instance, in \cite{MR2451291,MR2887928}, tug-of-war with noise is examined via a dynamic programming principle (DPP), yielding a probabilistic interpretation of $p$-harmonic functions.
Similar approaches can be found in \cite{MR2887928,MR3494400,MR3623556,MR4299842}.
For a broader overview of tug-of-war games, we refer the reader to \cite{Par23,Lewicka2020,Blanc2021}.

On the other hand, DPPs are also pivotal in the study of PDEs on graphs, where discrete versions of these principles are employed. These discrete principles have numerous applications in machine learning, including large-scale regression, semi-supervised learning, and classification problems (cf. \cite{machlear1,Bert2009}).
There are several noteworthy studies on discrete DPPs, such as \cite{Cal,Calder22,calder2022hamilton}.
Additionally, Arroyo, Blanc and Parviainen investigated Krylov-Safonov type regularity for Pucci-type extremal operators on random clouds in \cite{ABP24}, which serves as a foundation for our discussion on the consistency of the tug-of-war type operator on random data clouds. 

Our work builds on the contributions of Calder, as well as those of Arroyo, Blanc, and Parviainen. In \cite{Cal}, Calder investigated the game-theoretic $p-$Laplacian in the context of semi-supervised learning on graphs with few label--an important problem in data science and machine learning. It is well known that for general domains, the Dirichlet problem for the $p-$Laplace equation may admit no solution when $p\leq N$, due to insufficient boundary capacity. Consequently, \cite{Cal} restricted attention to the case $p>N$. To explore whether the conclusions in \cite{Cal} extend to domains with sufficiently regular boundaries, we introduce additional geometric assumptions on the domain $\Omega$. By leveraging the regularity theory developed in \cite{ABP24}, we are able to extend the range to $p\geq 2$. Moreover, we also generalize the conclusions in \cite{Cal} and \cite{ABP24} to the non-harmonic case, where $f\neq 0$.

The main result of this paper, stated in Section \ref{nm}, demonstrates that as the random geometric graphs asymptotically fill the domain, the discrete tug-of-war operator, defined by the above dynamic programming equation, is consistent with the weighted $p$-Laplace operator given by 
\[
\frac{\phi^{-2}}{|\nabla u|^{p-2}}\mathrm{div}(\phi^2|\nabla u|^{p-2}\nabla u)
\]
for a given weight function $\phi$. 
Furthermore, we show that solutions of \eqref{gradpp} converge to the viscosity solution of the corresponding weighted $p$-Laplace equation. 
Our approach involves decomposing the discrete tug-of-war operator into two parts. One part is shown to be consistent with the weighted Laplacian via the Bernstein inequality, while the other part is shown to converge to the infinity Laplacian based on \cite{Cal}. To prove the convergence of the solutions sequence, we utilize an Ascoli-Arzel\'{a} type criterion, based on the regularity results in \cite{ABP24}, and then directly verify that the limit is a viscosity solution.

\subsection{Notation and main result}\label{nm}
In this part, we describe our setup and main result, Theorem \ref{mainthm}. Throughout this paper, we assume that  $\Omega\subset \mathbb{R}^N$ is a bounded domain satisfying the uniform exterior ball condition as follows:
\begin{definition}\label{ueb}
   A bounded domain $\Omega\subset \mathbb{R}^N$ is said to satisfy the uniform exterior ball condition, that is,
for any $y \in \partial \Omega$, there exists $ B_{\delta}(z) \subset \mathbb{R}^{n} \backslash \Omega $ with $\delta > 0 $ such that $y \in  \partial B_{\delta}(z)$. 
\end{definition}
We define the inner $\epsilon$-strip of $\partial \Omega$ as
    \[
    \Gamma_{\epsilon}:=\{ x\in \Omega:\text{dist}(x,\partial \Omega)\leq \epsilon \}.
    \]
Let $\mathcal{X}_n=\{ 
Z_1,\dots,Z_n \}\subset \Omega$ be a random data cloud where each $Z_i$ is an independent and identically distributed random variable with Lipschitz density $\phi:\Omega\rightarrow[\phi_0,\phi_1],0<\phi_0\leq\phi_1<\infty$.
Set $B_{\epsilon}^{\mathcal{X}}(x)=\mathcal{X}_n\cap B_{\epsilon}(x)$. Let $p\geq 2$ and set $\alpha=\frac{p-2}{N+p}\in[0,1)$ and $ \beta=\frac{N+2}{N+p}\in (0,1]$. Then we can check that $\alpha+\beta=1$.
The tug-of-war type operator $\mathcal{L}_{\mathcal{X}_n,\epsilon}$ is given by
\begin{align*}
 \mathcal{L}_{\mathcal{X}_{n},\epsilon} u(x)&=\frac{\alpha}{2\epsilon^2}\left\{ 
\inf_{ Z_i\in B_{\epsilon}^{\mathcal{X}_{n} }(x) }  u(Z_i) +\sup_{ Z_j\in B_{\epsilon}^{\mathcal{X}_{n} }(x) }u(Z_j)-2u(x) \right\}\\ & \qquad\qquad+
\frac{\beta}{ card (B_{\epsilon}^{\mathcal{X}_{n}} (x))} \sum_{Z_i\in B_{\epsilon}^{\mathcal{X}_{n} }(x)} \frac{u(Z_i)-u(x)}{\epsilon^2}.
\end{align*}
We note that the relation between the operator $\mathcal{L}_{\mathcal{X}_{n},\epsilon}  $ and the tug-of-war game can be heuristically inferred by considering its Taylor expansion.

Now we state our main theorem here.
\begin{theorem} \label{mainthm}
    Let $\{ n_k \}_{k\ge 1}\subset\mathbb{N}$ and $\{\epsilon_k \}_{k\ge1}\subset (0,\epsilon_0)$ be a sequence of positive numbers such that 
    \begin{align}\label{con4}
        n_k\rightarrow {\infty}\,,\,\epsilon_k\rightarrow {0}\,\, \textrm{and}\,\, \liminf_{k} (2n_k \textrm{exp}\{ -c_0 n_k\epsilon_k^{3N+4+(N+2)a} \})<1
    \end{align}
    for some constant $a>0$ which will be introduced later.
For each $k\in \mathbb{N}$, let $\mathcal{X}_{n_k}^{(k)}\subset \Omega$ be a random data cloud in $\Omega$ and denote $\mathcal{O}_{\epsilon_k}=\mathcal{X}_{n_k}^{(k)}\cap \Gamma_{\epsilon_k}$.
Let $u_k\in L^{\infty}(\mathcal{X}_{n_k}^{(k)})$ satisfy 
    \begin{align}\label{dpp}
\left\{ \begin{array}{ll}
   \mathcal{L}_{\mathcal{X}_{n_k}^{(k)},\epsilon_k} u_k=f_k \,\,&\textrm{in} \,\, \mathcal{X}^{(k)}_{n_k}\setminus \mathcal{O}_{\epsilon_k},\\
    u_k=g\quad \quad \quad &\textrm{in}\,\,\mathcal{O}_{\epsilon_k}, \\
\end{array} \right.
\end{align}
where $\mathcal{L}_{\mathcal{X}_{n_k}^{(k)},\epsilon_k}$ stands for the tug-of-war operator on random graph $\mathcal{X}_{n_k}^{(k)}$, $g\in C(\Gamma_{\epsilon_0})$ and the function sequence $\{ f_k\}\subset L^{\infty}(\mathcal{X}_{n_k}^{(k)})$ satisfies the following assumptions:
\par $(1)$ there exists $C_f>0$ such that $|f_k\circ T_{\epsilon_k}(x)|<C_f$ for every $k$ and $x\in\overline{\Omega}$,
\par $(2)$ given $\eta>0$, there exists $r_0$ and $k_0$ such that 
\[
|f_k\circ T_{\epsilon_k}(x)-f_k\circ T_{\epsilon_k}(y)|<\eta
\]
for every $k\geq k_0$ and $x,y\in \overline{\Omega}$ with $|x-y|<r_0$.
\\Then there exists a $f\in C(\overline{\Omega})$ such that the limit continuous function $u$ is the viscosity solution to 
    \begin{align}\label{modeleq}
\left\{ \begin{array}{ll}
  \frac{N+p}{N+2}\frac{\phi^{-2}}{|\nabla u|^{p-2}}\mathrm{div}(\phi^2|\nabla u|^{p-2}\nabla u)=f \,\,&\textrm{in} \,\,\Omega,\\
    u=g\quad \quad \quad \quad \qquad \quad&\text{in}\,\,\partial \Omega, \\
\end{array} \right.
\end{align}
with probability $1$.
\end{theorem}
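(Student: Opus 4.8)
The plan is to combine two ingredients: a consistency statement for the discrete operator $\mathcal{L}_{\mathcal{X}_n,\epsilon}$ (which the introduction promises and which I assume is proved earlier), and a compactness-plus-stability argument in the spirit of the Barles--Souganidis framework for convergence of numerical schemes to viscosity solutions. First I would fix the interpolation/extension operator $T_{\epsilon_k}$ used to turn the discrete functions $u_k\in L^\infty(\mathcal{X}^{(k)}_{n_k})$ into functions on $\overline{\Omega}$, and record that the regularity theory from \cite{ABP24} yields a uniform modulus of continuity (asymptotic equicontinuity) and uniform boundedness for the family $\{u_k\circ T_{\epsilon_k}\}$, the latter coming from a discrete comparison/maximum principle applied against the bounded data $g$ and $f_k$. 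Condition \eqref{con4} is precisely what guarantees, via Borel--Cantelli, that the good geometric event (enough points in each ball $B_{\epsilon_k}(x)$ so that the empirical averages approximate the $\phi^2$-weighted integrals) holds for all large $k$ with probability $1$; I would invoke this to pass to a full-measure event on which all subsequent estimates are deterministic.

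Next, on that full-measure event, I would apply an Ascoli--Arzel\`a type criterion (the one referenced in the introduction) to extract a subsequence of $\{u_k\circ T_{\epsilon_k}\}$ converging uniformly on compact subsets of $\Omega$, and uniformly up to $\partial\Omega$ using the boundary regularity provided by the uniform exterior ball condition, to some $u\in C(\overline{\Omega})$ with $u=g$ on $\partial\Omega$; simultaneously I would extract a uniform limit $f\in C(\overline{\Omega})$ of $\{f_k\circ T_{\epsilon_k}\}$, whose existence is exactly what assumptions $(1)$ and $(2)$ supply (uniform bound plus asymptotic equicontinuity). The heart of the proof is then to verify that this $u$ is a viscosity solution of \eqref{modeleq}. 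I would test against a smooth $\varphi$ touching $u$ from above at an interior point $x_0$ with $\nabla\varphi(x_0)\neq 0$, locate points $x_k\to x_0$ realizing a maximum of $(u_k\circ T_{\epsilon_k})-\varphi$, and evaluate $\mathcal{L}_{\mathcal{X}^{(k)}_{n_k},\epsilon_k}\varphi(x_k)$. Here I would use the promised decomposition: the $\beta$-term, via the empirical-average-to-weighted-integral approximation and a Bernstein-type expansion, contributes the weighted Laplacian $\tfrac{N+2}{N+p}\,\phi^{-2}\mathrm{div}(\phi^2\nabla\varphi)\cdot|\nabla\varphi|^{0}$ normalized appropriately, while the $\alpha$-term (inf plus sup) converges to the infinity Laplacian as in \cite{Cal}; together, after multiplying through, these assemble into $\tfrac{N+p}{N+2}\,\phi^{-2}|\nabla\varphi|^{2-p}\mathrm{div}(\phi^2|\nabla\varphi|^{p-2}\nabla\varphi)$, giving the supersolution inequality $\ge f(x_0)$, with the subsolution case symmetric.

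Two technical points require care and form the main obstacles. The first, and the genuinely hard part, is the interchange of limits at the test point: the discrete operator $\mathcal{L}_{\mathcal{X}^{(k)}_{n_k},\epsilon_k}$ acts on the \emph{sampled values} $u_k(Z_i)$, not on $\varphi$, so one must first replace $u_k$ by $\varphi$ up to a controlled error using the touching condition (only valid at the maximum point, and only in the viscosity sense when $\nabla\varphi\neq 0$), and then control the Taylor expansion of $\varphi$ against the random geometry of $B_{\epsilon_k}^{\mathcal{X}}(x_k)$ uniformly in $k$ --- this is where the consistency lemma and the $\tfrac{1}{\epsilon^2}$ scaling must be reconciled, and where the rate in \eqref{con4} is consumed. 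The second obstacle is the degenerate/singular case $\nabla\varphi(x_0)=0$, where the $p$-Laplacian lacks a classical form and one must use the semicontinuous (relaxed) definition of viscosity solution for \eqref{modeleq}, checking that the discrete scheme respects the correct envelope; I would handle this by the standard device of showing the inequality is automatically satisfied when the test gradient vanishes (using boundedness of the inf--sup term and the fact that the second-order term drops out), so that no spurious constraint is imposed. Finally, a uniqueness/comparison principle for \eqref{modeleq} (available for the weighted $p$-Laplacian under our hypotheses) shows the full sequence converges, not merely a subsequence, completing the proof.
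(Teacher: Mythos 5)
Your proposal follows essentially the same route as the paper: the same decomposition of $\mathcal{L}_{\mathcal{X}_{n_k}^{(k)},\epsilon_k}$ into the averaging part (handled by the Bernstein inequality, Taylor expansion and Borel--Cantelli, consuming the rate in \eqref{con4}) and the inf--sup part (handled by Calder's lemmas from \cite{Cal}), the same compactness mechanism (uniform boundedness, H\"older and boundary estimates from \cite{ABP24} feeding an Ascoli--Arzel\`a criterion applied to $u_k\circ T_{\epsilon_k}$ and $f_k\circ T_{\epsilon_k}$), and the same verification that the limit is a viscosity solution by touching with a smooth test function at interior points and passing to the limit in the scheme.

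Two of your ``technical points,'' however, go beyond what the paper does, and it is worth seeing how the paper sidesteps them. First, the degenerate case $\nabla\varphi(x_0)=0$: the paper's definition of viscosity solution for \eqref{modeleq} only admits test functions with non-vanishing gradient at the touching point (as is standard for game-theoretic/normalized $p$-Laplace equations), and the consistency result (Lemma \ref{six}) is likewise stated only for $\nabla\varphi(x_0)\neq 0$, so no envelope or relaxation argument is ever needed. Your proposed device --- that the inequality is ``automatically satisfied'' because the second-order term drops out --- is not reliable as stated: in non-divergence form the operator carries the normalized factor $|\nabla u|^{-2}$ in the infinity-Laplacian part, which is singular rather than negligible at critical points, so handling this case would require the usual eigenvalue-envelope definition, not a free pass. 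Second, the comparison-principle step: the paper never upgrades subsequential convergence to full-sequence convergence; its conclusion is only that a (subsequential) limit solves \eqref{modeleq}, which is exactly what your first two paragraphs establish. Your parenthetical claim that uniqueness/comparison is ``available for the weighted $p$-Laplacian under our hypotheses'' is the one genuinely shaky assertion in the proposal: for normalized $p$-Laplace equations with a right-hand side $f$ of varying sign, comparison is delicate and not something to invoke in passing, and the paper deliberately avoids relying on it. Neither point invalidates your argument for the statement as given; they are claims you added beyond the theorem's actual content, one of which you could not justify without substantial extra work.
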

The proof of our main result proceeds as follows. In Section 2, we are concerned with the consistency of the tug-of-war operator $ \mathcal{L}_{\mathcal{X}_{n_k}^{(k)},\epsilon_k}$  with the weighted $p$-Laplacian
\begin{align*}
    \frac{\phi^{-2}}{|\nabla \varphi|^{p-2}}\mathrm{div}(\phi^2|\nabla \varphi|^{p-2}\nabla \varphi)= \frac{1}{\phi^2}\mathrm{div}(\phi^2\nabla \varphi)+(p-2)\Delta_{\infty}\varphi.
\end{align*}
More precisely, we focus on establishing the consistency of the operators $\mathcal{L}_2^k$ and $\mathcal{L}^k_{\infty}$ (see the definition \eqref{l2} and \eqref{lin})
since the operator $ \mathcal{L}_{\mathcal{X}_{n_k}^{(k)},\epsilon_k} $ can be represented by a linear combination of those two operators. 
The Taylor expansion of those operators provides the key idea of the proof of consistency.
We also investigate the convergence of the solution to \eqref{dpp} as $\epsilon_k \to 0$ in Section 3. To do this, we define the extension of our solutions (see Definition \ref{tmap}), and then use an Ascoli-Arzel\'{a} type criterion (Lemma \ref{conv}). With the consistency and convergence, we finally obtain that the limit $u$ solves the model problem \eqref{modeleq} in a viscosity sense.

\section{Consistency}
There have been a number of preceding results for the consistency between operators on graphs and general differential operators in $\mathbb{R}^N$. For example, we can refer the reader to the paper \cite{ABP24} for a result about the asymptotic expansion of the Pucci-type maximal operator on graphs in $\mathbb{R}^N$. 
In \cite{Cal}, one can also find a consistency result for a class of operators on graphs, which is called the game-theoretic graph $p$-Laplacian. 
We employ and improve the method used in \cite{Cal} to obtain our consistency result.    
\par The key ingredient of this section is the following theorem, which gives us the consistency of $  \mathcal{L}_{\mathcal{X}_{n_k}^{(k)},\epsilon_k} $.
\begin{lemma}\label{six}
    For all $x_0\in \Omega$ and $\varphi\in C^{\infty}(\mathbb{R}^N)$ with $\nabla \varphi (x_0)\neq 0$,
    \begin{align*}
        \lim_{\substack{k\rightarrow\infty\\x\rightarrow x_0}  }
        \mathcal{L}_{\mathcal{X}_{n_k}^{(k)},\epsilon_k} \varphi(x)=\frac{N+p}{N+2}\phi^{-2}|\nabla\varphi|^{2-p}\mathrm{div}(\phi^2|\nabla \varphi|^{p-2}\nabla \varphi)|_{x=x_0}
    \end{align*}
    with probability $1$.
\end{lemma}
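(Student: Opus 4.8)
The plan is to decompose the tug-of-war operator into two pieces and handle each separately, using the decomposition suggested by the paper itself. Since $\alpha+\beta=1$, I would write
\begin{align*}
\mathcal{L}_{\mathcal{X}_{n_k}^{(k)},\epsilon_k}\varphi(x)
 = \alpha\,\mathcal{L}_{\infty}^{k}\varphi(x)+\beta\,\mathcal{L}_{2}^{k}\varphi(x),
\end{align*}
where $\mathcal{L}_{\infty}^{k}\varphi(x)=\frac{1}{2\epsilon_k^2}\{\inf_{Z\in B^{\mathcal{X}}_{\epsilon_k}(x)}u(Z)+\sup_{Z\in B^{\mathcal{X}}_{\epsilon_k}(x)}u(Z)-2u(x)\}$ is the (scaled) min-max part and $\mathcal{L}_{2}^{k}\varphi(x)$ is the averaging part. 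The strategy is to show separately that $\mathcal{L}_{\infty}^{k}\varphi(x)\to\Delta_{\infty}\varphi(x_0)$ and $\mathcal{L}_{2}^{k}\varphi(x)\to\phi^{-2}\mathrm{div}(\phi^2\nabla\varphi)(x_0)$ as $k\to\infty$ and $x\to x_0$, both with probability $1$. Since $\frac{N+p}{N+2}\phi^{-2}|\nabla\varphi|^{2-p}\mathrm{div}(\phi^2|\nabla\varphi|^{p-2}\nabla\varphi)=\frac{N+p}{N+2}\bigl(\phi^{-2}\mathrm{div}(\phi^2\nabla\varphi)+(p-2)\Delta_\infty\varphi\bigr)$, combining the two limits with weights $\alpha=\frac{p-2}{N+p}$ and $\beta=\frac{N+2}{N+p}$ yields $\beta\cdot\phi^{-2}\mathrm{div}(\phi^2\nabla\varphi)+\alpha\cdot\Delta_\infty\varphi$, and multiplying by $\frac{N+p}{N+2}=\beta^{-1}$ gives exactly the claimed limit. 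So the algebra closes once the two convergences are in hand.

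For the averaging part $\mathcal{L}_2^k$, the plan is a Taylor expansion argument controlled by concentration of measure. Fix $x$ near $x_0$ where $\nabla\varphi\neq0$. Writing $\varphi(Z_i)-\varphi(x)=\nabla\varphi(x)\cdot(Z_i-x)+\tfrac12(Z_i-x)^T D^2\varphi(x)(Z_i-x)+O(\epsilon_k^3)$ and dividing by $\epsilon_k^2$, I would show that the empirical average over $Z_i\in B^{\mathcal{X}}_{\epsilon_k}(x)$ is close to the continuum average against the density $\phi$. The continuum average of the gradient term against $\phi$ on $B_{\epsilon_k}(x)$ produces, after expanding $\phi$ to first order, the drift term $\frac{2}{N+2}\frac{\nabla\phi\cdot\nabla\varphi}{\phi}$, while the Hessian term produces $\frac{1}{N+2}\Delta\varphi$; together these assemble (up to the normalizing factor) into $\phi^{-2}\mathrm{div}(\phi^2\nabla\varphi)$. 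The probabilistic content is replacing empirical quantities by their expectations, which is where the Bernstein-type concentration inequality and the condition \eqref{con4} on $n_k\epsilon_k^{3N+4+(N+2)a}$ enter: these guarantee, via Borel--Cantelli, that with probability $1$ every ball $B_{\epsilon_k}(x)$ contains enough points and that the empirical averages converge uniformly. This is essentially the Bernstein-inequality argument the introduction promises.

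For the min-max part $\mathcal{L}_\infty^k$, the plan is to invoke the consistency result for the game-theoretic graph infinity Laplacian from \cite{Cal}, adapted to the random cloud setting via \cite{ABP24}. Heuristically, the continuum infimum and supremum of $\varphi$ over $B_{\epsilon_k}(x)$ are attained near $x\mp\epsilon_k\frac{\nabla\varphi(x)}{|\nabla\varphi(x)|}$, and the second-order Taylor expansion of $\tfrac12(\inf+\sup-2\varphi(x))$ yields $\tfrac12\epsilon_k^2\,\Delta_\infty\varphi(x)+o(\epsilon_k^2)$, so dividing by $\epsilon_k^2$ gives $\Delta_\infty\varphi$ in the limit. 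The nontrivial point is that the inf and sup are taken over the \emph{discrete} set $B^{\mathcal{X}}_{\epsilon_k}(x)$ rather than the full ball, so I must control the gap between the discrete and continuum extrema; this again requires that points accumulate densely enough near the optimal directions, which is exactly what \eqref{con4} secures with probability $1$.

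The main obstacle I anticipate is the probabilistic uniformity: all estimates must hold simultaneously for every center $x$ in $\Omega$ (since the limit is taken jointly as $k\to\infty$ and $x\to x_0$) and along the full sequence $\{(n_k,\epsilon_k)\}$, with a single probability-$1$ event. Pointwise concentration is routine, but upgrading to a uniform-in-$x$ statement requires a covering/net argument over $\Omega$ together with Lipschitz (or continuity) control of $\varphi$, $\nabla\varphi$, and $D^2\varphi$, and the resulting union bound is precisely why the exponent $3N+4+(N+2)a$ and the $\liminf$ condition in \eqref{con4} take their particular form. Verifying that the discrete inf/sup for $\mathcal{L}_\infty^k$ track their continuum counterparts uniformly, where one cannot simply average away the randomness, will be the most delicate step and is where the regularity machinery of \cite{ABP24} does the heavy lifting.
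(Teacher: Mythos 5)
Your overall strategy is the same as the paper's: the decomposition $\mathcal{L}_{\mathcal{X}_{n_k}^{(k)},\epsilon_k}\varphi=\alpha\mathcal{L}^k_\infty\varphi+\beta\mathcal{L}^k_2\varphi$ (your version in fact fixes the $\alpha$--$\beta$ swap in \eqref{dep}), Taylor expansion plus the Bernstein inequality (Lemma \ref{bern}) and Borel--Cantelli for $\mathcal{L}^k_2$, and Calder's comparison of discrete and continuum min--max operators for $\mathcal{L}^k_\infty$. The gap is in your final assembly, and it is not cosmetic.

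The individual limits do not carry the constants you assign them. The paper's Section 2.1 gives $\mathcal{L}^k_2\varphi\to\frac{C_1}{2\omega_N}\phi^{-2}\mathrm{div}(\phi^2\nabla\varphi)$ with $\frac{C_1}{2\omega_N}=\frac{1}{2(N+2)}$, and Lemma \ref{infty} gives $\mathcal{L}^k_\infty\varphi\to\frac{1}{2}\Delta_\infty\varphi$. Your own Taylor computations actually reproduce such normalizations: you write $\tfrac12(\inf+\sup-2\varphi(x))=\tfrac12\epsilon_k^2\Delta_\infty\varphi+o(\epsilon_k^2)$, which after dividing by $\epsilon_k^2$ is $\tfrac12\Delta_\infty\varphi$, not $\Delta_\infty\varphi$, and your drift and Hessian terms sum to a dimensional multiple of $\phi^{-2}\mathrm{div}(\phi^2\nabla\varphi)$, not to it exactly; yet you then discard these factors and declare the limits to be $\Delta_\infty\varphi$ and $\phi^{-2}\mathrm{div}(\phi^2\nabla\varphi)$. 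The factor $N+2$ separating the two normalizations is precisely what converts the convex weights $\alpha=\frac{p-2}{N+p}$, $\beta=\frac{N+2}{N+p}$ into the coefficient $p-2$ in front of $\Delta_\infty$:
\begin{align*}
\alpha\cdot\tfrac12\Delta_\infty\varphi+\beta\cdot\tfrac{1}{2(N+2)}\phi^{-2}\mathrm{div}(\phi^2\nabla\varphi)
=\tfrac{1}{2(N+p)}\left[\phi^{-2}\mathrm{div}(\phi^2\nabla\varphi)+(p-2)\Delta_\infty\varphi\right].
\end{align*}
With your constants one gets instead $\frac{N+2}{N+p}\bigl[\phi^{-2}\mathrm{div}(\phi^2\nabla\varphi)+\frac{p-2}{N+2}\Delta_\infty\varphi\bigr]$, i.e.\ a multiple of the weighted $q$-Laplacian with $q-2=\frac{p-2}{N+2}\neq p-2$: the wrong operator, not merely the wrong constant. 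Moreover, the step ``multiplying by $\frac{N+p}{N+2}=\beta^{-1}$'' is not available to you: the limit of $\mathcal{L}_{\mathcal{X}_{n_k}^{(k)},\epsilon_k}\varphi$ is whatever the convex combination converges to, and no rescaling can be applied afterwards. (In fairness, the paper's own bookkeeping of the \emph{overall} constant is also inconsistent---the display above is $\frac{1}{2(N+p)}$ times the weighted $p$-Laplacian, not $\frac{N+p}{N+2}$ times it as Lemma \ref{six} asserts---but the paper does retain the relative factor $N+2$ between the two pieces, which is what preserves the $p$-Laplacian structure; your version loses exactly that.)
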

For convenience, we denote $\mathcal{L}^k_2$, $\mathcal{L}^k_{\infty}$ by 
\begin{align} \label{l2}
    \mathcal{L}^k_2 \varphi(x)&=\frac{1}{ card \big(B_{\epsilon_k}^{\mathcal{X}_{n_k}^{(k)}} (x)\big)} \sum_{Z_i\in B_{\epsilon_k}^{\mathcal{X}_{n_k}^{(k)}} (x)} \frac{\varphi(Z_i)-\varphi(x)}{\epsilon_k^2} \end{align} and
 \begin{align}   \label{lin}
    \mathcal{L}^k_{\infty}\varphi(x)&=\frac{1}{2\epsilon_k^2}\left\{ 
\inf_{ Z_i\in B_{\epsilon_k}^{\mathcal{X}_{n_k}^{(k)}} (x) }  \varphi(Z_i)+\sup_{ Z_j\in B_{\epsilon_k}^{\mathcal{X}_{n_k}^{(k)}} (x) }\varphi(Z_j)-2\varphi(x) \right\}
\end{align}
for $\varphi\in C^{\infty}(\Omega)$, respectively. Then we have 
\begin{align}\label{dep}
      \mathcal{L}_{\mathcal{X}_{n_k}^{(k)},\epsilon_k} \varphi(x)=\alpha\mathcal{L}^k_2 \varphi(x)+  \beta\mathcal{L}^k_{\infty}\varphi(x),
\end{align}
where $\alpha=\frac{p-2}{N+p}$ and $\beta=\frac{N+2}{N+p} $.
Since $\mathcal{L}_{\mathcal{X}_{n_k}^{(k)},\epsilon_k} $  can be represented as a convex combination of $\mathcal{L}^k_2\varphi(x)$ and $\mathcal{L}^k_{\infty}\varphi(x)$, it suffices to consider consistency of $\mathcal{L}^k_2$ and $\mathcal{L}^k_{\infty}$.

\subsection{Consistency of $\mathcal{L}^k_2$}
In this subsection, we investigate the consistency of $\mathcal{L}^k_2$.
Mean value characterizations based on Taylor expansions are powerful tools for analyzing stochastic games. In the Euclidean setting, a number of such characterizations have been developed (see \cite{MR2566554,MR2684311,MR2875296}, for example). These approaches are still useful when considering stochastic games on graphs.

To establish our consistency result, we begin by stating several lemmas that will be used throughout this subsection. The first is a Bernstein-type inequality for random variables on geometric graphs, which can be found in \cite[Lemma 1]{Cal}.
\begin{lemma}\label{bern}
    Let $Y_1,\dots,Y_n$ be a sequence of i.i.d random variables on $\mathbb{R}^N$ with Lebesgue density $f:\mathbb{R}^N\rightarrow\mathbb{R}$, let $\psi:\mathbb{R}^N\rightarrow\mathbb{R}$ be bounded and Borel measurable with compact support in a bounded domain $\Omega\subset \mathbb{R}^N$, and define 
    \[
    Y=\sum_{i=1}^n \psi(Y_i).
    \]
    Then for any $0\leq \lambda\leq 1$, we have
    \begin{align*}
        \mathbb{P}(|Y-\mathbb{E}(Y)|>\| f \|_{\infty} \| \psi \|_{\infty}n|\Omega|\lambda )\leq 2\exp\bigg\{ -\frac{1}{4}\| f \|_{\infty} n|\Omega|\lambda^2 \bigg\}
    \end{align*}
    with $\|\psi\|_{\infty}=\| \psi \|_{L^{\infty}(\Omega)}$, and $|\Omega|$ is the Lebesgue measure of $\Omega$.
\end{lemma}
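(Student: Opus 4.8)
The plan is to establish this concentration estimate by the classical exponential (Chernoff) moment method, exploiting that the summands $X_i := \psi(Y_i)$ are i.i.d., uniformly bounded, and have a \emph{small} second moment precisely because $\psi$ is supported in $\Omega$. Write $M = \|\psi\|_\infty$, $A = \|f\|_\infty$ and $V = |\Omega|$, so that the threshold is $t = AMVn\lambda$ and the target exponent is $-\tfrac14 AVn\lambda^2$. First I would reduce to the one-sided bound $\mathbb{P}(Y - \mathbb{E}Y > t) \leq \exp(-\tfrac14 AVn\lambda^2)$; the matching lower tail follows by applying the same estimate with $\psi$ replaced by $-\psi$ (which has the same sup-norm and support), and a union bound then produces the prefactor $2$.

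The first input is a second-moment bound. Since $\psi$ vanishes off its compact support $K \subseteq \Omega$ and $f \leq A$ pointwise,
\[
\mathbb{E}[X_i^2] = \int_K \psi^2 f \,\leq\, M^2 A\, |K| \,\leq\, M^2 A V .
\]
For the upper tail I apply Markov's inequality to $e^{sY}$ with $s>0$ and factor over independence:
\[
\mathbb{P}(Y - \mathbb{E}Y > t) \leq e^{-s(\mathbb{E}Y + t)}\bigl(\mathbb{E}[e^{sX_1}]\bigr)^n .
\]
To control the single-variable moment generating function I would use that $\psi$ is supported in $K$ to write $\mathbb{E}[e^{sX_1}] = 1 + \int_K (e^{s\psi} - 1) f$, together with the elementary fact that $u \mapsto (e^u - 1 - u)/u^2$ is increasing, which gives $e^{s\psi} - 1 - s\psi \leq \tfrac{e^{sM}-1-sM}{(sM)^2}(s\psi)^2$ on $K$. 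Integrating against $f$ and inserting the second-moment bound yields
\[
\mathbb{E}[e^{sX_1}] \leq \exp\left(s\,\mathbb{E}X_1 + AV\,(e^{sM} - 1 - sM)\right),
\]
so that after cancelling $e^{-s\mathbb{E}Y} = e^{-sn\mathbb{E}X_1}$ I obtain
\[
\mathbb{P}(Y - \mathbb{E}Y > t) \leq \exp\left(-st + nAV\,(e^{sM} - 1 - sM)\right).
\]

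The final step is to optimize the free parameter $s$. Setting $sM = \log(1+\lambda)$ and recalling $t = AMVn\lambda$, the exponent collapses to $nAV\bigl(\lambda - (1+\lambda)\log(1+\lambda)\bigr)$, so it remains to verify the scalar Bennett-type inequality $(1+\lambda)\log(1+\lambda) - \lambda \geq \tfrac14\lambda^2$ for $\lambda \in [0,1]$; in fact the sharper estimate $(1+\lambda)\log(1+\lambda) - \lambda \geq \tfrac{3\lambda^2}{6 + 2\lambda} \geq \tfrac38\lambda^2$ holds on this range, comfortably beating the required constant $\tfrac14$. The only genuinely nonroutine points are this closing scalar inequality and the matching choice of $s$; everything else is the standard bounded-summand Bernstein machinery. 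I therefore expect no serious obstacle—the crux is organizing the moment estimate so that the factor $AV = \|f\|_\infty|\Omega|$ emerges cleanly from the compact support of $\psi$, while carrying the scale $M = \|\psi\|_\infty$ correctly through to the threshold.
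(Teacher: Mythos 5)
Your proof is correct: the second-moment bound $\mathbb{E}[\psi(Y_i)^2]\leq \|f\|_\infty\|\psi\|_\infty^2|\Omega|$ coming from the compact support, the Bennett-type MGF estimate via the monotonicity of $u\mapsto (e^u-1-u)/u^2$, the optimal choice $sM=\log(1+\lambda)$, and the closing scalar inequality $(1+\lambda)\log(1+\lambda)-\lambda\geq \frac{3\lambda^2}{6+2\lambda}\geq \frac14\lambda^2$ for $\lambda\in[0,1]$ all check out (the case $\lambda=0$ being trivial, and the two-sided bound following from $\pm\psi$ plus a union bound exactly as you say). Note that the paper itself offers no proof of Lemma \ref{bern} --- it is imported verbatim from \cite[Lemma 1]{Cal} --- and your Chernoff--Bennett argument is essentially the standard bounded-summand Bernstein proof given in that source, so the two approaches coincide.
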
 
The following is a well-known, useful result to estimate probabilities.
\begin{lemma}\label{boca}(Borel-Cantelli)
    Let $(\Omega,\mathcal{F},\mathbb{P})$ be a probability space and $A_1,A_2,\dots \in \mathcal{F}$.
    Then the following are true:
    \par (1) If $\sum_{n=1}^{\infty} \mathbb{P}(A_n)<\infty$, then $\mathbb{P}(\limsup_{n\rightarrow\infty}A_n)=0$.
    \par (2) If $A_1,A_2,\dots$ are independent and $\sum_{n=1}^{\infty} \mathbb{P}(A_n)=\infty$, then $$\mathbb{P}(\limsup_{n\rightarrow\infty}A_n)=1.$$\\
    Consequently, if $A_1,A_2,\dots$ are independent, then
    \[
    \sum_{n=1}^{\infty} \mathbb{P}(A_n)<\infty\textrm{ if and only if } \mathbb{P}(\limsup_{n\rightarrow\infty}A_n)=0
    \]
    and
     \[
    \sum_{n=1}^{\infty} \mathbb{P}(A_n)=\infty\textrm{ if and only if } \mathbb{P}(\limsup_{n\rightarrow\infty}A_n)=1.
    \]
\end{lemma}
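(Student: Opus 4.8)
The plan is to establish the two parts independently and then read off the two biconditional consequences. The organizing identity is the representation of the $\limsup$ as the event that infinitely many $A_n$ occur,
\[
\limsup_{n\to\infty} A_n = \bigcap_{m=1}^\infty \bigcup_{n=m}^\infty A_n .
\]
Everything reduces to estimating the probabilities of the tails $\bigcup_{n\ge m} A_n$ and of their complements $\bigcap_{n\ge m} A_n^c$.

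For part (1) I would use only monotonicity and countable subadditivity of $\mathbb{P}$. Since $\limsup_n A_n \subseteq \bigcup_{n=m}^\infty A_n$ for every $m$,
\[
\mathbb{P}\Big(\limsup_{n\to\infty} A_n\Big) \le \mathbb{P}\Big(\bigcup_{n=m}^\infty A_n\Big) \le \sum_{n=m}^\infty \mathbb{P}(A_n),
\]
and the right-hand tail tends to $0$ as $m\to\infty$ because the full series converges; hence the left-hand side is $0$. No independence is needed here.

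For part (2) I would pass to the complement: by De Morgan,
\[
\Big(\limsup_{n\to\infty} A_n\Big)^c = \bigcup_{m=1}^\infty \bigcap_{n=m}^\infty A_n^c,
\]
so it is enough to show $\mathbb{P}\big(\bigcap_{n=m}^\infty A_n^c\big)=0$ for every fixed $m$. This is where independence is essential. For a finite truncation $M\ge m$, independence of the $A_n^c$ together with the elementary bound $1-x\le e^{-x}$ gives
\[
\mathbb{P}\Big(\bigcap_{n=m}^M A_n^c\Big) = \prod_{n=m}^M \big(1-\mathbb{P}(A_n)\big) \le \exp\Big(-\sum_{n=m}^M \mathbb{P}(A_n)\Big).
\]
Letting $M\to\infty$ and using continuity of $\mathbb{P}$ from above along the decreasing events $\bigcap_{n=m}^M A_n^c$, the divergence of the tail series forces this probability to $0$; consequently the countable union above has probability $0$ and $\mathbb{P}(\limsup_n A_n)=1$.

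The two displayed equivalences then follow mechanically from the dichotomy that $\sum_n \mathbb{P}(A_n)$ is either finite or infinite: for the first biconditional, part (1) supplies one implication while the contrapositive of part (2) supplies its converse, and the second biconditional is obtained symmetrically. The main obstacle is the limiting argument in part (2) — correctly converting the complement of the $\limsup$ into a $\liminf$ of complements, deploying independence through the product formula and the exponential estimate, and passing to the limit $M\to\infty$ via continuity from above; part (1) and the equivalences are routine bookkeeping.
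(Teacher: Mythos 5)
Your proof is correct: part (1) via monotonicity and countable subadditivity with the vanishing tail $\sum_{n\ge m}\mathbb{P}(A_n)$, and part (2) via De Morgan, the product formula for independent complements, the bound $1-x\le e^{-x}$, and continuity from above, with the two biconditionals following from the finite/infinite dichotomy. The paper states this lemma as a classical fact without giving any proof, and your argument is precisely the standard textbook proof of the Borel--Cantelli lemma, so there is nothing to reconcile; the only implicit step worth flagging is that independence of the $A_n$ entails independence of the complements $A_n^c$, which is routine.
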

\par We first
Observe that
\[
\mathbb{E}(\mathcal{L}^k_2\varphi(x))=\frac{1}{\epsilon_k^2 
\int_{B_{\epsilon_k}(x)}\phi 
}\left({\int_{B_{\epsilon_k}(x)} (\varphi(y) }-\varphi(x))\phi(y)dy   \right).
\]
By Lemma \ref{bern} we have 
\begin{align*}
    \left|\mathcal{L}^k_2\varphi(x)-  \mathbb{E}(\mathcal{L}^k_2\varphi(x))\right|\geq C\delta n_k \epsilon_k^{N}
\end{align*}
occur with probability at most 2$\exp(-c\delta^2n_k\epsilon_k^N)$ provided $0<\delta\leq1$.
\par Set $\gamma=\|\varphi\|_{C^3(B_{\epsilon_k}(x))}$. Using Taylor expansion, we get
\begin{align}\label{taylor}
\begin{split}
\mathcal{L}^k_2\varphi(x)=&
\frac{\epsilon_k^N}{ \int_{B_{\epsilon_k}(x)}\phi }
\int_{B_{1}(0)} \left( \frac{1}{\epsilon_k}\nabla \varphi(x)\cdot z \right)\phi(x+\epsilon_k z)dz\\
&+\frac{\epsilon_k^N}{ \int_{B_{\epsilon_k}(x)}\phi }
\int_{B_{1}(0)} \left(\frac{1}{2}z\cdot \nabla^2 \varphi(x)z\right) \phi(x+\epsilon_k z)dz+o(\gamma) 
\end{split}
\end{align}
\noindent holds for all $\varphi\in C^{\infty}(\Omega)$ with probability at least $1-2\exp(-c\delta^2 n_k \epsilon_k^N )$.

Notice that
   \begin{align*}
       \frac{1}{\epsilon_k}\int_{B_1(0)}(\nabla \varphi(x)\cdot z)\phi(x+\epsilon_k z)dz&=\nabla \phi(x)\cdot \int_{B_1(0)}( \nabla\varphi(x)\cdot z )zdz+O(\epsilon_k\gamma)\\
       &=\nabla \phi(x)\cdot \sum_{i=1}^N\nabla_i\varphi(x)\int_{B_1(0)}z_izdz+O(\epsilon_k\gamma)\\
&=C_1 \nabla \phi(x)\cdot\nabla\varphi(x)+O(\epsilon_k\gamma),  
   \end{align*} 
   and 
\begin{align*}
    \frac{1}{2}\int_{B_{1}(0)} (z\cdot \nabla^2 \varphi(x)z)\phi(x+\epsilon_k z)dz&=\frac{1}{2}\phi(x) \sum_{i,j=1}^N \nabla^2_{ij}\varphi(x)\int_{B_1(0)} z_iz_j dz+O(\epsilon_k\gamma)\\
    &=\frac{1}{2}C_1 \phi(x)\text{tr}(\nabla^2\varphi)+O(\epsilon_k\gamma),
\end{align*}
where $C_1=\int_{B_1(0)} z_1^2dz$.
Combining this with \eqref{taylor}, we have 
\begin{align}
    \mathcal{L}^k_2\varphi(x)=\frac{C_1}{2}\frac{\epsilon_k^N}{ \int_{B_{\epsilon_k}(x)}\phi }\phi^{-1}\mathrm{div}(\phi^2\nabla \varphi)+O(\epsilon_k\gamma)+o(1)
\end{align}
holds with probability at least $1-2\exp(-c\delta^2 n_k \epsilon_k^N )$.
Combining \eqref{con4} with Lemma \ref{boca}, $\mathcal{L}_2^k \varphi$ converges to $\frac{C_1}{2\omega_N}\phi^{-2}\mathrm{div}(\phi^2\nabla \varphi)$ with probability $1$.

\subsection{Consistency of $\mathcal{L}^k_{\infty}$}
We focus on the following consistency result for $\mathcal{L}^k_{\infty}$ here.
\begin{lemma}\label{infty}
    \begin{align}\label{cons}
    \lim_{k\rightarrow\infty} \mathcal{L}^k_{\infty} \varphi (x)=\frac{1}{2}\Delta_{\infty}\varphi(x),
    \end{align}
    where
    \[
    \Delta_{\infty}\varphi =\frac{1}{|\nabla\varphi|^2} \sum_{i,j=1}^N \varphi_{x_ix_j}\varphi_{x_i}\varphi_{x_j}.
    \]
\end{lemma}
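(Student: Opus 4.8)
The plan is to compare the two discrete extrema inside $\mathcal{L}^k_\infty$ with the corresponding extrema over the \emph{full} ball and then reduce everything to the classical second-order asymptotics of the infinity Laplacian. Fix $x\in\Omega$ with $\nabla\varphi(x)\neq0$ (needed for $\Delta_\infty\varphi(x)$ to be defined) and set $\nu=\nabla\varphi(x)/|\nabla\varphi(x)|$. Introduce
\[
S_k^c=\sup_{y\in B_{\epsilon_k}(x)}\varphi(y),\quad I_k^c=\inf_{y\in B_{\epsilon_k}(x)}\varphi(y),\quad S_k=\sup_{Z\in B_{\epsilon_k}^{\mathcal{X}_{n_k}^{(k)}}(x)}\varphi(Z),\quad I_k=\inf_{Z\in B_{\epsilon_k}^{\mathcal{X}_{n_k}^{(k)}}(x)}\varphi(Z).
\]
Since every sampled point lies in $B_{\epsilon_k}(x)$ we always have $S_k\le S_k^c$ and $I_k\ge I_k^c$, so only a lower bound on $S_k$ and an upper bound on $I_k$ are missing. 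Writing $D_k=(S_k^c-S_k)+(I_k-I_k^c)\ge0$, the elementary estimate
\[
\Bigl|\mathcal{L}^k_\infty\varphi(x)-\tfrac{1}{2\epsilon_k^2}\bigl(S_k^c+I_k^c-2\varphi(x)\bigr)\Bigr|\le \frac{D_k}{2\epsilon_k^2}
\]
splits the operator into a deterministic (continuum) part and a random defect. The lemma follows once I show the deterministic part tends to $\tfrac12\Delta_\infty\varphi(x)$ and $D_k=o(\epsilon_k^2)$ almost surely.

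For the deterministic part I would run the standard Lagrange-multiplier argument: the maximizer $y_k^+$ of $\varphi$ over $\overline{B_{\epsilon_k}(x)}$ lies on the sphere with $\nabla\varphi(y_k^+)$ parallel to $y_k^+-x$, so $(y_k^+-x)/\epsilon_k=\nu+O(\epsilon_k)$; inserting this into a third-order Taylor expansion gives $S_k^c=\varphi(x)+\epsilon_k|\nabla\varphi(x)|+\tfrac{\epsilon_k^2}{2}\,\nu^{\mathsf T}\nabla^2\varphi(x)\nu+o(\epsilon_k^2)$, and symmetrically $I_k^c=\varphi(x)-\epsilon_k|\nabla\varphi(x)|+\tfrac{\epsilon_k^2}{2}\,\nu^{\mathsf T}\nabla^2\varphi(x)\nu+o(\epsilon_k^2)$. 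Adding and using $\nu^{\mathsf T}\nabla^2\varphi(x)\nu=\Delta_\infty\varphi(x)$ yields $\tfrac{1}{2\epsilon_k^2}(S_k^c+I_k^c-2\varphi(x))=\tfrac12\Delta_\infty\varphi(x)+o(1)$. (This is exactly the known asymptotic mean value expansion for the infinity Laplacian, so I could alternatively just cite \cite{MR2566554,MR2684311,MR2875296}.)

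The crux is controlling the random defect $D_k$, and here it is essential to get $o(\epsilon_k^2)$ rather than merely $o(\epsilon_k)$, because we divide by $\epsilon_k^2$. I would choose a radius $\rho_k$ with $\rho_k=o(\epsilon_k^2)$ but $n_k\rho_k^N\to\infty$; such a choice exists because \eqref{con4} forces $n_k\epsilon_k^{2N}\to\infty$ (as $2N<3N+4+(N+2)a$ and $\epsilon_k<1$, one has $n_k\epsilon_k^{2N}\ge n_k\epsilon_k^{3N+4+(N+2)a}\to\infty$), e.g. $\rho_k=\epsilon_k^2(n_k\epsilon_k^{2N})^{-1/(2N)}$. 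Put $y_k^{\pm}=x\pm(\epsilon_k-\rho_k)\nu$, so that $B_{\rho_k}(y_k^{\pm})\subset B_{\epsilon_k}(x)$. A Taylor estimate shows that for every $z\in B_{\rho_k}(y_k^{+})$ one has $S_k^c-\varphi(z)\le C\rho_k+C\epsilon_k^3=o(\epsilon_k^2)$, so a \emph{single} sampled point in $B_{\rho_k}(y_k^{+})$ already forces $S_k^c-S_k=o(\epsilon_k^2)$, and symmetrically for $I_k$ via $B_{\rho_k}(y_k^{-})$. The probability that $B_{\rho_k}(y_k^{+})$ contains no sampled point is at most
\[
\bigl(1-\phi_0\,\omega_N\rho_k^N\bigr)^{n_k}\le \exp\bigl(-\phi_0\,\omega_N\, n_k\rho_k^N\bigr),\qquad \omega_N=|B_1(0)|,
\]
and likewise for $y_k^{-}$. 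Because $n_k\rho_k^N\to\infty$ fast enough under \eqref{con4}, these probabilities are summable, so by the Borel–Cantelli lemma (Lemma \ref{boca}(1)) almost surely only finitely many of the ``empty-cap'' events occur; hence $D_k=o(\epsilon_k^2)$ with probability one, which combined with the deterministic part proves \eqref{cons}.

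I expect the genuine obstacle to be precisely this last step: unlike the $\mathcal{L}_2^k$ part, where a Bernstein average concentrates, the extrema are driven by the single best sampled point, and the $o(\epsilon_k^2)$ requirement forces one to locate a sample inside a set of radius $o(\epsilon_k^2)$ around the continuum extremizer while still lying in $B_{\epsilon_k}(x)$. Making the void-probability bound quantitative (choosing $\rho_k$, verifying summability, and keeping the caps internally tangent to $B_{\epsilon_k}(x)$ so the extremizing sample is admissible) is the delicate part; everything else is the deterministic expansion, which is classical. A minor additional point is that the constants $C,L$ in the Taylor bounds should be taken uniform on a neighborhood of $x$ where $\nabla\varphi\neq0$, which also yields the joint limit $x\to x_0$, $k\to\infty$ needed for Lemma \ref{six}.
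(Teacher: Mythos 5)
Your route is genuinely different from the paper's. The paper does not touch the sharp-ball extrema directly: following Calder, it introduces a $C^2$ cutoff $\Phi$, compares the weighted discrete operator $L^k_\infty$ with its continuum counterpart $H^k$ through the fill distance $r_k$ via Lemma \ref{coninf} (error $\lesssim r_k^2\epsilon_k^{-1}$), invokes Calder's consistency $\epsilon_k^{-2}H^k\varphi\to s_0^2\Phi(s_0)\Delta_\infty\varphi$, and only at the end sends $\Phi_i\to\mathbf{1}_{[0,1]}$ to recover the unweighted operator. The whole point of the weight is that the extremand $\omega(\cdot,x)(\varphi(\cdot)-\varphi(x))$ has a \emph{flat interior} maximum, so fill-distance errors enter quadratically and $r_k=o(\epsilon_k^{3/2})$ suffices; your sharp-ball extremum sits on $\partial B_{\epsilon_k}(x)$, where $\varphi$ varies linearly in the radial direction, which is exactly why you are forced into the stronger $o(\epsilon_k^2)$ localization. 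In exchange, your argument is self-contained and treats the actual operator $\mathcal{L}^k_\infty$ directly, avoiding the paper's final exchange of limits in $i$ and $k$ (whose constants degenerate as $\Phi_i$ approaches the indicator, and whose uniformity the paper does not verify). Your deterministic part --- the expansion of the continuum sup/inf and the defect splitting $D_k$ --- is correct.

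The genuine gap is in the probabilistic step. Condition \eqref{con4} is a $\liminf$ condition: it yields $n_k\epsilon_k^{3N+4+(N+2)a}\gtrsim\log n_k$ only \emph{along a subsequence}; it does not assert $n_k\epsilon_k^{3N+4+(N+2)a}\to\infty$, so your deduction that $n_k\epsilon_k^{2N}\to\infty$ for the full sequence is unjustified, and the summability needed for Borel--Cantelli is strictly stronger still. Indeed, with your choice of $\rho_k$ one gets $n_k\rho_k^N=(n_k\epsilon_k^{2N})^{1/2}$, and even the favorable bound $n_k\epsilon_k^{2N}\gtrsim\log n_k$ only gives void probabilities of size $\exp(-c\sqrt{\log n_k})$, which need not be summable; so Lemma \ref{boca}(1) does not apply as claimed, and in fact no full-sequence a.s.\ limit can hold under \eqref{con4} alone (along a bad subsequence $B_{\epsilon_k}(x)$ may typically contain no samples at all). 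The clean repair uses the paper's own Lemma \ref{two} and \eqref{con11}: on the partition event, every point of $\Omega$ lies within $\epsilon_k^{3+a}$ of a sample, so retracting the continuum maximizer $y^*$ inward by $2\epsilon_k^{3+a}$ produces an \emph{admissible} sample $Z\in B_{\epsilon_k}(x)$ with $|Z-y^*|\le 3\epsilon_k^{3+a}$, whence $D_k=O(\epsilon_k^{3+a})=o(\epsilon_k^2)$ with no fresh void-probability estimate; that event occurs for infinitely many $k$ almost surely (independence of the clouds across $k$ plus Lemma \ref{boca}(2)), which is precisely the subsequential sense in which the paper itself establishes \eqref{cons}. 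With that substitution your argument closes, and is arguably a cleaner proof of the lemma as it bears on the operator actually being used.
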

To this end, we first need to introduce a continuous function $\Phi :[0,2]\rightarrow \mathbb{R}$
satisfying:
\begin{align}\label{dec}
\Phi=1\text{ in }(0,1),\text{decreases to }0\text{ in }(1,2); 
    \Phi'(1)=\Phi''(1)=\Phi'(2)=\Phi''(2)=0.
\end{align}
For example,
\begin{align*}
\Phi(s)=\left\{ \begin{array}{lll}
1,& \textrm{ for } 0\le s\le 1,\\ 
(s-2)^3(-6s^2+9s-4),& \textrm{ for } 1<s<2,\\
    0,&\textrm{ for } s\geq 2. \\
\end{array} \right.
\end{align*}
Set $\omega(y,z)=\Phi(\frac{|y-z|}{\epsilon_k})$ for $ y,z\in B_{\epsilon
_k}(x)$.
Similarly to \cite{Cal}, we consider the following operators:
\begin{align*}
L_{\infty}^k\varphi(x)&=\left\{ 
\inf_{ Z_i\in {\mathcal{X}_{n_k}^{(k)}} } \omega(Z_i,x) (\varphi(Z_i)-\varphi(x))+\sup_{ Z_j\in {\mathcal{X}_{n_k}^{(k)}}  }\omega(Z_j,x)(\varphi(Z_j)-\varphi(x)) \right\},
\end{align*}
and
\begin{align*}
H^k\varphi(x)&=\left\{ 
\inf_{ Z_i\in \Omega } \omega(Z_i,x) (\varphi(Z_i)-\varphi(x))+\sup_{ Z_j\in \Omega }\omega(Z_j,x)(\varphi(Z_j)-\varphi(x)) \right\}.
\end{align*}
To derive the consistency, we recall the following lemma (see \cite[Lemma 1-2]{Cal}).
\begin{lemma}\label{coninf}
    Let $\varphi\in C^2(\mathbb{R}^N)$. Then
    \begin{align} \label{cons2}
        |L^k_{\infty}\varphi(x)-H^k\varphi(x)  |\leq C(\| \varphi \|_{C^1(B_x(2\epsilon_k))}+\epsilon_k\| \varphi \|_{C^2(B_x(2\epsilon_k))})r^2_k\epsilon_k^{-1},
    \end{align}
    where
    \begin{align*}
        r_n=\sup_{y\in \Omega} \text{dist}(y,\mathcal{X}^{(k)}_{n_k}).
    \end{align*}
\end{lemma}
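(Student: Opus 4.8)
The plan is to reduce the two‑sided estimate to a comparison of the discrete and continuous extrema of the single auxiliary function $g(y):=\omega(y,x)\bigl(\varphi(y)-\varphi(x)\bigr)$, and then to exploit that these extrema are attained at \emph{critical points} of $g$, so that the approximation error is quadratic in the fill distance $r_k:=\sup_{y\in\Omega}\text{dist}\bigl(y,\mathcal{X}^{(k)}_{n_k}\bigr)$ rather than linear. Writing $L^k_\infty\varphi(x)=\inf_{Z\in\mathcal{X}^{(k)}_{n_k}}g(Z)+\sup_{Z\in\mathcal{X}^{(k)}_{n_k}}g(Z)$ and $H^k\varphi(x)=\inf_{y\in\Omega}g(y)+\sup_{y\in\Omega}g(y)$, it then suffices to bound $\bigl|\sup_\mathcal{X}g-\sup_\Omega g\bigr|$ and $\bigl|\inf_\mathcal{X}g-\inf_\Omega g\bigr|$ separately and to add them.

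First I would record the regularity and support of $g$. The matching conditions \eqref{dec} on $\Phi$—namely the vanishing of $\Phi'$ and $\Phi''$ at $s=1$ and $s=2$—ensure that $y\mapsto\omega(y,x)=\Phi(|y-x|/\epsilon_k)$ is $C^2$ on $\mathbb{R}^N$ and supported in $\overline{B_{2\epsilon_k}(x)}$; hence $g\in C^2$ with the same support and $g\equiv0$ on $\partial B_{2\epsilon_k}(x)$. For $k$ large enough that $B_{2\epsilon_k}(x)\subset\Omega$ (which holds in the regime $x\to x_0\in\Omega$, $\epsilon_k\to0$ of interest), the continuum extrema are attained on this compact ball; since $g$ vanishes on its boundary, both the supremum and the infimum are attained at interior critical points $y^*$ with $\nabla g(y^*)=0$ (the degenerate case where an extremum equals $0$ is handled identically by taking $y^*=x$, which is itself interior).

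The crux is a Hessian bound on $g$ with the right $\epsilon_k$‑scaling. Writing $g=\omega\,h$ with $h(y)=\varphi(y)-\varphi(x)$ and expanding $\nabla^2 g=h\,\nabla^2\omega+\nabla\omega\otimes\nabla h+\nabla h\otimes\nabla\omega+\omega\,\nabla^2 h$, I would use the scalings $|\nabla\omega|\le C\epsilon_k^{-1}$ and $|\nabla^2\omega|\le C\epsilon_k^{-2}$ (from the chain rule and the boundedness of $\Phi',\Phi''$), together with the decisive observation that on $\text{supp}\,\omega$ one has $|h|\le 2\epsilon_k\|\varphi\|_{C^1}$. The worst term $h\,\nabla^2\omega$ then contributes only $O(\epsilon_k\cdot\epsilon_k^{-2})\|\varphi\|_{C^1}=O(\epsilon_k^{-1})\|\varphi\|_{C^1}$—one surviving factor $\epsilon_k^{-1}$—while the remaining terms are controlled by $C(\epsilon_k^{-1}\|\varphi\|_{C^1}+\|\varphi\|_{C^2})$, giving $\|\nabla^2 g\|_{L^\infty(B_{2\epsilon_k}(x))}\le C(\epsilon_k^{-1}\|\varphi\|_{C^1}+\|\varphi\|_{C^2})$.

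Finally I would close by a second‑order Taylor expansion at the critical points. For the supremum, take a maximizer $y^+$ with $\nabla g(y^+)=0$ and a cloud point $Z_j$ with $|Z_j-y^+|\le r_k$; then $g(Z_j)\ge g(y^+)-\tfrac12\|\nabla^2 g\|_\infty r_k^2$, whence $0\le\sup_\Omega g-\sup_\mathcal{X}g\le\tfrac12\|\nabla^2 g\|_\infty r_k^2$, and symmetrically for the infimum. Summing the two contributions and inserting the Hessian bound yields $|L^k_\infty\varphi(x)-H^k\varphi(x)|\le\|\nabla^2 g\|_\infty r_k^2\le C(\|\varphi\|_{C^1}+\epsilon_k\|\varphi\|_{C^2})\,r_k^2\epsilon_k^{-1}$, which is exactly \eqref{cons2}. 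The main obstacle, and the reason a naive Lipschitz estimate (which gives only $O(r_k)$) is insufficient, is precisely this gain of one order: it hinges on the optimizers being critical points, so that the linear Taylor term vanishes and the quadratic remainder—tempered by the single $\epsilon_k^{-1}$ from the Hessian of the cutoff—produces the sharp $r_k^2\epsilon_k^{-1}$ rate.
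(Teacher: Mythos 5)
Your proof is correct, and it is essentially the argument of the source the paper relies on: the paper does not prove Lemma \ref{coninf} itself but quotes it from \cite[Lemmas 1--2]{Cal}, whose proof uses exactly your two ingredients --- interior extrema of the compactly supported auxiliary function $g=\omega(\cdot,x)\,(\varphi(\cdot)-\varphi(x))$ are critical points (so the linear Taylor term vanishes and the error is quadratic in the fill distance $r_k$), and the Hessian bound $\|\nabla^2 g\|_\infty\le C\bigl(\epsilon_k^{-1}\|\varphi\|_{C^1}+\|\varphi\|_{C^2}\bigr)$ coming from $|\varphi(y)-\varphi(x)|\le 2\epsilon_k\|\varphi\|_{C^1}$ on the support of the cutoff, which is where the $C^2$ matching conditions \eqref{dec} on $\Phi$ enter. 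Your explicit caveat that $B_{2\epsilon_k}(x)\subset\Omega$ is needed (so that continuum extremizers are interior) is a legitimate point, consistent with how the lemma is applied in the paper, namely at points $x\to x_0\in\Omega$ with $\epsilon_k\to 0$.
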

In the proof of Lemma \ref{coninf}, it was assumed that $\Phi$ is $C^2$, which is satisfied by condition \eqref{dec}.
To this end, we also need to employ another result in \cite{Cal} (see Theorem 6 therein).
\begin{lemma}
    For any $x_0\in \Omega\setminus \mathcal{O}$ and $\varphi\in C^3(\mathbb{R}^N)$ with $\nabla\varphi(x_0)\neq 0$
    \begin{align} \label{cons3}
        \lim_{\substack{
        k\rightarrow\infty\\
          x\rightarrow x_0}
          }
          \frac{1}{\epsilon_k^2}H^k\varphi(x)=s_0^2\Phi(s_0)\Delta_{\infty}\varphi(x_0),
    \end{align}
    where $s_0$ is the unique maximum point of $s\Phi(s)$ in $[0,2]$.
\end{lemma}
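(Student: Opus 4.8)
The plan is to perform a second-order Taylor expansion of the weighted difference $\omega(y,x)(\varphi(y)-\varphi(x))$ and to show that, although the first-order terms of the $\inf$ and the $\sup$ do not vanish individually, they are equal and opposite, so that they cancel upon summation and leave a clean $\epsilon_k^2$-order contribution proportional to $\Delta_{\infty}\varphi$; this is exactly the mechanism behind the classical tug-of-war expansion of the infinity Laplacian. First I would reduce the optimization to a ball: since $\Phi$ is supported in $[0,2]$, the weight $\omega(y,x)=\Phi(|y-x|/\epsilon_k)$ vanishes for $|y-x|\ge 2\epsilon_k$, and because $x_0\in\Omega$ is interior with $\mathrm{dist}(x_0,\partial\Omega)=:d_0>0$, for $x$ near $x_0$ and $k$ large enough that $2\epsilon_k<d_0/2$ one has $B_{2\epsilon_k}(x)\subset\Omega$, so the $\inf$/$\sup$ over $\Omega$ coincide with those over $\overline{B_{2\epsilon_k}(x)}$. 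Writing $y=x+\epsilon_k s e$ with $s\in[0,2]$ and $|e|=1$, the $C^{3}$-regularity of $\varphi$ yields
\[
\omega(y,x)(\varphi(y)-\varphi(x))=\Phi(s)\Big[\epsilon_k s\,\nabla\varphi(x)\cdot e+\tfrac12\epsilon_k^2 s^2\, e^{T}\nabla^2\varphi(x)e\Big]+O(\epsilon_k^3),
\]
with the error uniform in $(s,e)$ in terms of $\|\varphi\|_{C^3}$ on a neighbourhood of $x_0$.

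Next I would analyse the leading-order term. Setting $\nu=\nabla\varphi(x)/|\nabla\varphi(x)|$, which is well defined and close to $\nabla\varphi(x_0)/|\nabla\varphi(x_0)|$ since $\nabla\varphi(x_0)\neq 0$ and $\varphi\in C^3$, the leading term $\epsilon_k\,s\Phi(s)\,\nabla\varphi(x)\cdot e$ is maximised over unit $e$ at $e=\nu$ and minimised at $e=-\nu$, while its $s$-dependence is governed by $h(s):=s\Phi(s)$, whose unique maximiser on $[0,2]$ is $s_0$. The conditions \eqref{dec} make $\Phi$ of class $C^{2}$, hence $h\in C^2$, and the maximum is nondegenerate, $h''(s_0)<0$ (readily checked from the explicit $\Phi$); similarly $e\mapsto\nabla\varphi\cdot e$ has a nondegenerate maximum at $e=\nu$ on the unit sphere. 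These two nondegeneracies, together with $\nabla\varphi(x_0)\neq 0$, are what I would use to control the true optimisers.

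The crux — and the step I expect to be the main obstacle — is showing that the true optimisers differ from the leading-order ones $(s_0,\pm\nu)$ by $O(\epsilon_k)$ and that this discrepancy perturbs the extremal values only at order $O(\epsilon_k^3)$. The point is that one sits at a critical point of the leading-order functional: writing the optimal direction as $e=\pm\nu+e^{\perp}$ and the optimal radius as $s=s_0+\delta$, the strict concavity of $e\mapsto\nabla\varphi\cdot e$ at $\pm\nu$ forces $e^{\perp}=O(\epsilon_k)$, and $h'(s_0)=0$ with $h''(s_0)<0$ forces $\delta=O(\epsilon_k)$; substituting back, the quadratic penalties in $e^{\perp}$ and in $\delta$ each contribute only $O(\epsilon_k^3)$. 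Consequently, to order $\epsilon_k^2$ one may freeze $e=\pm\nu$ and $s=s_0$, and since $(\pm\nu)^{T}\nabla^2\varphi(x)(\pm\nu)=\varphi_{\nu\nu}(x)$ is the same for both signs, the second-order terms of the $\inf$ and $\sup$ agree whereas the first-order terms are equal and opposite:
\[
\sup_{y}\omega(y,x)(\varphi(y)-\varphi(x))=\epsilon_k s_0\Phi(s_0)|\nabla\varphi|+\tfrac12\epsilon_k^2 s_0^2\Phi(s_0)\varphi_{\nu\nu}+O(\epsilon_k^3),
\]
\[
\inf_{y}\omega(y,x)(\varphi(y)-\varphi(x))=-\epsilon_k s_0\Phi(s_0)|\nabla\varphi|+\tfrac12\epsilon_k^2 s_0^2\Phi(s_0)\varphi_{\nu\nu}+O(\epsilon_k^3).
\]

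Finally I would add these two expansions. The $O(\epsilon_k)$ terms cancel, giving $H^k\varphi(x)=\epsilon_k^2 s_0^2\Phi(s_0)\varphi_{\nu\nu}(x)+O(\epsilon_k^3)$, and since $\varphi_{\nu\nu}(x)=\nu^{T}\nabla^2\varphi(x)\nu=\Delta_{\infty}\varphi(x)$, dividing by $\epsilon_k^2$ yields $\epsilon_k^{-2}H^k\varphi(x)=s_0^2\Phi(s_0)\Delta_{\infty}\varphi(x)+O(\epsilon_k)$. Letting $k\to\infty$ and $x\to x_0$ and using continuity of $\Delta_{\infty}\varphi$ at $x_0$ (again guaranteed by $\nabla\varphi(x_0)\neq 0$) gives \eqref{cons3}.
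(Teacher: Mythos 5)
Your argument is correct in substance, but it is worth knowing how it sits relative to the paper: the paper gives no proof of this lemma at all --- it is quoted verbatim from Calder \cite{Cal} (Theorem 6 there), and what you have written is essentially a self-contained reconstruction of that cited proof: localize to $\overline{B_{2\epsilon_k}(x)}$ using the compact support of $\Phi$, Taylor-expand the weighted difference, identify the leading-order optimizers $(s_0,\pm\nu)$, show the true optimizers stay close, and let the first-order terms cancel in the sum of $\sup$ and $\inf$ while the second-order terms add up to $s_0^2\Phi(s_0)\varphi_{\nu\nu}=s_0^2\Phi(s_0)\Delta_\infty\varphi$. So you have supplied a proof where the paper supplies only a reference.

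Two quantitative points in your write-up need repair, though neither endangers the conclusion. First, the claim that strict concavity ``forces'' the optimizers to lie within $O(\epsilon_k)$ of $(s_0,\pm\nu)$: a pure value comparison against a nondegenerate quadratic maximum only yields distance $O(\sqrt{\epsilon_k})$ (from $c\,\mathrm{dist}^2\le C\epsilon_k$); the $O(\epsilon_k)$ rate requires either the first-order optimality conditions or the balancing argument (quadratic penalty of size $\epsilon_k\,\mathrm{dist}^2$ against linear gain of size $\epsilon_k^2\,\mathrm{dist}$). Second, the nondegeneracy $h''(s_0)<0$ for $h(s)=s\Phi(s)$ is not guaranteed by \eqref{dec} for an arbitrary admissible $\Phi$ (it holds for the explicit example, but the lemma is stated for general $\Phi$). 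Both issues dissolve if you run the softer sandwich: writing $F(s,e)$ for the weighted difference at $y=x+\epsilon_k s e$, $G_0(s,e)=s\Phi(s)\nabla\varphi(x)\cdot e$ and $G_1(s,e)=\tfrac12 s^2\Phi(s)\,e^{T}\nabla^2\varphi(x)e$, uniqueness of the maximizer of $G_0$ plus compactness already give that the optimizers converge to $(s_0,\pm\nu)$, whence $\sup F\le \epsilon_k G_0(s_0,\nu)+\epsilon_k^2\bigl(G_1(s_0,\nu)+o(1)\bigr)+O(\epsilon_k^3)$ and $\sup F\ge F(s_0,\nu)$, and symmetrically for the $\inf$; adding the two gives $H^k\varphi(x)=\epsilon_k^2 s_0^2\Phi(s_0)\varphi_{\nu\nu}(x)+o(\epsilon_k^2)$, which is all that \eqref{cons3} needs. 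With that adjustment your proof is complete, and uniform for $x$ near $x_0$ exactly as you indicate.
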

It is not difficult to find a sequence of $\{\Phi_i\}_i$ that satisfies \eqref{dec} such that 
\[
\lim_{i\rightarrow\infty}\Phi_i=\mathbf{1}_{[0,1]},\lim_{i\rightarrow\infty} s_i^2 \Phi_i(s_i)=1,
\]
 where $\mathbf{1}_{[0,1]}$ denotes the indicator function of interval 
$(0,1)$ and $s_i$ is the maximum point of $s\Phi_i(s)$ in $[0,2]$. Then by \eqref{con2} and \eqref{con3}, we finally obtain \eqref{cons} in Lemma \ref{infty}.
\begin{remark}
Indeed,
\[
C_1=\int_{B_1(0)} z_1^2dz=\frac{1}{N}\int_{B_1(0)}z^2dz=\frac{1}{N}\int_{0}^1 N\omega_N r^{N+1}dr=\frac{\omega_N}{N+2}. 
\]
Combining the results of Sec. 2.1 and Sec. 2.2, we have $\mathcal{L}_{\mathcal{X}_{n_k}^{(k)},\epsilon_k} \varphi$ is consistent with 
\[
 \frac{(p-2)(N+2)}{N+p}\Delta_{\infty}\varphi+\frac{N+2}{N+p}{\phi^{-2}} \mathrm{div}(\phi^2\nabla \varphi)
\]
since $\alpha=\frac{p-2}{p+N},\beta=\frac{N+2}{N+p}$.
Then we finally get the Lemma \ref{six}: $\mathcal{L}_{\mathcal{X}_{n_k}^{(k)},\epsilon_k} \varphi$ is consistent with 
\[
\frac{N+p}{N+2}\frac{1}{\phi^2}|\nabla\varphi|^{2-p}\mathrm{div}(\phi^2|\nabla \varphi|^{p-2}\nabla \varphi)=\frac{N+p}{N+2}( \frac{1}{\phi^2}\mathrm{div}(\phi^2\nabla\varphi)+(p-2)\Delta_{\infty}\varphi).
\]
\end{remark}

\section{Proof of Theorem \ref{mainthm}}\label{pfthm}

In this section, we give the proof of our main result, Theorem \ref{mainthm}.
For convenience, we recall our main result here.
\begin{theorem}
    Let $\{ n_k \}_{k\ge 1}\subset\mathbb{N}$ and $\{\epsilon_k \}_{k\ge1}$ be a sequence of positive numbers such that 
    \begin{align}\label{con0}
        n_k\rightarrow {\infty}\,,\,\epsilon_k\rightarrow {0}\,\, \textrm{and}\,\, \liminf_{k} (2n_k \textrm{exp}\{ -c_0 n_k\epsilon_k^{3N+4+(N+2)a} \})<1
    \end{align}
    for some constant $a>0$ which will be introduced later.
For each $k\in \mathbb{N}$, let $\mathcal{X}_{n_k}^{(k)}\subset \Omega$ be a random data cloud in $\Omega$ and denote $\mathcal{O}_{\epsilon_k}=\mathcal{X}_{n_k}^{(k)}\cap \Gamma_{\epsilon_k}$.
Let $u_k\in L^{\infty}(\mathcal{X}_{n_k}^{(k)})$ satisfy 
    \begin{align}\label{dpf}
\left\{ \begin{array}{ll}
   \mathcal{L}_{\mathcal{X}_{n_k}^{(k)},\epsilon_k} u_k=f_k \,\,&\textrm{in} \,\, \mathcal{X}^{(k)}_{n_k}\setminus \mathcal{O}_{\epsilon_k},\\
    u_k=g\quad \quad \quad &\textrm{in}\,\,\mathcal{O}_{\epsilon_k}, \\
\end{array} \right.
\end{align}
where $\mathcal{L}_{\mathcal{X}_{n_k}^{(k)},\epsilon_k}$ stands for the tug-of-war operator on random graph $\mathcal{X}_{n_k}^{(k)}$, $g\in C(\Gamma_{\epsilon_0})$ and the function sequence $\{ f_k\in L^{\infty}(\mathcal{X}_{n_k}^{(k)})\}$ satisfies the following assumptions:
\par $(1)$ there exists $C_f>0$ such that $|f_k\circ T_{\epsilon_k}(x)|<C_f$ for every $k$ and $x\in\overline{\Omega}$,
\par $(2)$ given $\eta>0$, there exists $r_0$ and $k_0$ such that 
\[
|f_k\circ T_{\epsilon_k}(x)-f_k\circ T_{\epsilon_k}(y)|<\eta
\]
for every $k\geq k_0$ and $x,y\in \overline{\Omega}$ with $|x-y|<r_0$.
\\Then there exists a $f\in C(\overline{\Omega})$ such that the limit continuous function $u$ is the viscosity solution to 
    \begin{align}\label{meq}
\left\{ \begin{array}{ll}
  \frac{N+p}{N+2}\frac{\phi^{-2}}{|\nabla u|^{p-2}}\mathrm{div}(\phi^2|\nabla u|^{p-2}\nabla u)=f \,\,&\textrm{in} \,\,\Omega,\\
    u=g\quad \quad \quad \quad \qquad \quad&\text{in}\,\,\partial \Omega, \\
\end{array} \right.
\end{align}
with probability $1$.
\end{theorem}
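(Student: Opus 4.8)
The plan is to follow the standard strategy for convergence of monotone approximation schemes: combine the consistency result of Lemma \ref{six} with a compactness argument and then verify the viscosity inequalities directly. First I would pass from the discrete solutions $u_k$, defined only on the cloud $\mathcal{X}_{n_k}^{(k)}$, to globally defined functions $\bar u_k=u_k\circ T_{\epsilon_k}$ on $\overline\Omega$ via the extension map of Definition \ref{tmap}, and likewise consider the extended data $f_k\circ T_{\epsilon_k}$. The maximum-principle structure of $\mathcal{L}_{\mathcal{X}_{n_k}^{(k)},\epsilon_k}$ together with the uniform bound on $g$ and assumption $(1)$ yields a uniform $L^\infty$ bound on $\{\bar u_k\}$; the uniform (Hölder) equicontinuity is inherited from the Krylov--Safonov type regularity for these operators on random clouds established in \cite{ABP24}, which holds with probability tending to $1$ and hence, after combining \eqref{con0} with Borel--Cantelli (Lemma \ref{boca}), along the whole sequence with probability $1$. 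Applying the Ascoli--Arzelà criterion of Lemma \ref{conv} to $\{\bar u_k\}$, and separately to $\{f_k\circ T_{\epsilon_k}\}$ using assumptions $(1)$ and $(2)$, I extract a common subsequence along which $\bar u_k\to u$ and $f_k\circ T_{\epsilon_k}\to f$ uniformly, with $u,f\in C(\overline\Omega)$.

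Next I would verify the boundary condition $u=g$ on $\partial\Omega$. Since the boundary data are imposed on the strip $\mathcal{O}_{\epsilon_k}=\mathcal{X}_{n_k}^{(k)}\cap\Gamma_{\epsilon_k}$ and $g\in C(\Gamma_{\epsilon_0})$, uniform convergence gives $u=g$ on $\partial\Omega$ once one controls the oscillation of $\bar u_k$ near $\partial\Omega$; this is where the uniform exterior ball condition of Definition \ref{ueb} enters, through the construction of suitable barriers (as in \cite{ABP24,Cal}) forcing the extended solutions to attain the boundary values continuously as $\epsilon_k\to 0$.

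The core of the argument is to show that $u$ is a viscosity solution of \eqref{meq}. Let $\varphi\in C^\infty(\mathbb{R}^N)$ touch $u$ from below at $x_0\in\Omega$ with $\nabla\varphi(x_0)\neq 0$; after the usual reduction I may assume the touching is strict. By uniform convergence of $\bar u_k$ to $u$, the function $u_k-\varphi$ attains its minimum over the cloud points in a fixed ball $B_r(x_0)$ at an interior point $x_k\in\mathcal{X}_{n_k}^{(k)}$ with $x_k\to x_0$. At such a point $u_k(Z)-u_k(x_k)\ge\varphi(Z)-\varphi(x_k)$ for all nearby cloud points $Z$, and since $\mathcal{L}_{\mathcal{X}_{n_k}^{(k)},\epsilon_k}$ is monotone (being built from an infimum, a supremum, and an average of increments, all with nonnegative weights $\alpha,\beta$), this gives $\mathcal{L}_{\mathcal{X}_{n_k}^{(k)},\epsilon_k}\varphi(x_k)\le\mathcal{L}_{\mathcal{X}_{n_k}^{(k)},\epsilon_k}u_k(x_k)=f_k(x_k)$. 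Letting $k\to\infty$ and invoking Lemma \ref{six} on the left and the uniform convergence $f_k\circ T_{\epsilon_k}\to f$ on the right yields
\[
\frac{N+p}{N+2}\,\phi^{-2}|\nabla\varphi|^{2-p}\mathrm{div}\bigl(\phi^2|\nabla\varphi|^{p-2}\nabla\varphi\bigr)\big|_{x_0}\le f(x_0),
\]
which is exactly the supersolution inequality; reversing the roles of sup and inf (test functions touching from above) gives the subsolution inequality in the same way.

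The main obstacle is the singular case $\nabla\varphi(x_0)=0$, where Lemma \ref{six} does not apply and the operator in \eqref{meq} must be read through its semicontinuous envelopes. Here I would follow the standard device for $p$-Laplace type operators (cf. \cite{MR2566554,Lewicka2020}): for test functions with vanishing gradient one verifies the relaxed inequalities obtained by replacing the $\Delta_\infty$ contribution with the extremal term $(p-2)\max_{|\xi|=1}\langle\nabla^2\varphi(x_0)\xi,\xi\rangle$ (respectively the minimum), which are handled by a direct estimate of the discrete increments or are vacuous when $\nabla^2\varphi(x_0)=0$. Finally, since the weighted $p$-Laplace Dirichlet problem \eqref{meq} satisfies a comparison principle, its viscosity solution is unique; this upgrades the subsequential convergence to convergence of the full extended sequence and identifies $u$ unambiguously, all on an event of probability $1$ obtained by intersecting the full-measure events coming from the consistency and regularity steps.
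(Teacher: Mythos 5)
Your proposal is correct and its overall architecture coincides with the paper's: extend $u_k$ and $f_k$ to $\overline{\Omega}$ via the transport map of Definition \ref{tmap}, get uniform boundedness and H\"older equicontinuity from the Krylov--Safonov theory of \cite{ABP24} (the paper's Lemmas \ref{ub}--\ref{ac}, applied through the Pucci-type bounds \eqref{comparison}), pass to a uniform limit by the Ascoli--Arzel\'a criterion (Lemmas \ref{aa} and \ref{conv}), and handle the boundary through barriers built on the exterior ball condition (Lemma \ref{bd}). The one place where you genuinely diverge is the verification of the viscosity inequalities: you use the monotonicity of $\mathcal{L}_{\mathcal{X}_{n_k}^{(k)},\epsilon_k}$ at a touching point ($\mathcal{L}\varphi(x_k)\le \mathcal{L}u_k(x_k)=f_k(x_k)$, then Lemma \ref{six}), whereas the paper's main proof of Theorem \ref{vis} works with an almost-minimum point (with $\epsilon_k^3$ slack), Taylor expansion, and the quantitative estimate of \cite{ABP24} comparing discrete cloud averages with continuum averages; your shorter route is exactly what the paper records in the Remark after Theorem \ref{vis} as Calder's alternative proof, so both are endorsed by the paper, with the paper's main argument buying explicit control of the discrete-to-continuum error and yours buying brevity. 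Two of your additional steps are unnecessary for the statement as formulated: the singular case $\nabla\varphi(x_0)=0$ does not arise because the paper's notion of viscosity solution (Definition 3.9) only tests with functions having nonvanishing gradient at the touching point, and the final upgrade to full-sequence convergence via a comparison principle for the weighted normalized $p$-Laplace problem is not claimed by the theorem and is asserted by you without justification (comparison with a right-hand side for such normalized operators is delicate), so it should either be dropped or proved; neither point affects the correctness of the rest of your argument.
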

Now we only need to prove the following results:
\\\textbf{(R.1)} The solutions sequence $\{u_k\}$ of \eqref{dpf}, at least for a subsequence, has a limit function $u$.
\\\textbf{(R.2)} The limit $u$ is a viscosity solution to \eqref{meq}.

\subsection{Convergence}
In this part, we are concerned with the convergence of solutions to \eqref{gradpp}. 
To do this, we give an extension of the solution sequence and employ an Ascoli-Arzel\'{a} criterion.

First, we need to present a segmentation of $\Omega$ associated with the data cloud $\mathcal{X}_n$.
We refer the reader to the following result, \cite[Lemma 2.1]{ABP24}.
Roughly speaking, this lemma enables us to describe the event of the points being evenly distributed with respect to $\mu$.
\begin{lemma}\label{two}
    Let $\phi$ be a Lipschitz continuous probability density in $\overline{\Omega}$ and $\epsilon>0$.
    For a random data cloud $\mathcal{X}_n=\{ 
Z_1,\dots,Z_n \}\subset \Omega$ with density $\phi$, there exists a probabilty measure $\mu_{\epsilon}$ with density $\phi_{\epsilon}\in L^{\infty}(\Omega)$ and a partition $\mathcal{U}_1,\dots,\mathcal{U}_n$ of $\Omega$ such that
\begin{align}\label{con1}
Z_i\in \mathcal{U}_i\subset B_{\epsilon^3}(Z_i)\quad \text{and}\quad \mu_{\epsilon}(\mathcal{U}_i)=\int_{\mathcal{U}_i} \phi_{\epsilon}(y)dy=\frac{1}{n}    
\end{align}
and
\begin{align}\label{con2}
    \| \phi_{\epsilon}-\phi \|_{\infty}\leq C_0 \epsilon^2
\end{align}
with a probability of at least
\begin{align}\label{con3}
    1-2n\exp\{ -c_0n\epsilon^{3N+4} \},
\end{align}
where $C_0>0$ and $c_0=c_0(\Omega)$ are universal.
\end{lemma}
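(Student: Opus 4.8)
The plan is to realise the partition by a two–scale construction: a deterministic fine grid enforces the geometric (diameter) constraint of \eqref{con1}, while a single Bernstein estimate per grid cell simultaneously supplies the non-emptiness that lets each data point own a cell and the mass control that defines $\phi_\epsilon$. Concretely, I would tile $\mathbb{R}^N$ by half-open cubes of side $\epsilon^3/\sqrt N$, so that each cube $Q$ has $\operatorname{diam}(Q)\le\epsilon^3$, keep those meeting $\Omega$ (intersected with $\Omega$), and write $\mu=\phi\,dy$ and $N_Q=\operatorname{card}(\mathcal{X}_n\cap Q)$, so that $\mathbb{E}N_Q=n\mu(Q)$. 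Since every cell will lie inside one such $Q$ and contain that cube's point, the inclusion $\mathcal{U}_i\subset B_{\epsilon^3}(Z_i)$ in \eqref{con1} becomes automatic from $\operatorname{diam}(Q)\le\epsilon^3$ together with $Z_i\in Q$.

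The probabilistic heart is to apply Lemma \ref{bern} to $\psi=\mathbf{1}_Q$, taking the bounded domain in that lemma to be $Q$ itself, so that the measure appearing there is $|Q|\sim\epsilon^{3N}$, while $\|\psi\|_\infty=1$ and $\|f\|_\infty\le\phi_1$. Choosing the deviation parameter $\lambda\sim\epsilon^2$ (admissible since $\lambda\le1$ for small $\epsilon$) yields $|N_Q-n\mu(Q)|\le C\,n|Q|\epsilon^2$ off an event of probability at most $2\exp\{-\tfrac14\phi_1 n|Q|\lambda^2\}\le 2\exp\{-c_0 n\epsilon^{3N+4}\}$, which is precisely the per-cell content of the exponent in \eqref{con3}. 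There are $O(\epsilon^{-3N})\le n$ cubes meeting $\Omega$ (the last inequality holds for small $\epsilon$ because the hypothesis forces $n\epsilon^{3N+4}\to\infty$, hence $n\gtrsim\epsilon^{-3N}$), so a union bound over the cubes produces the global failure bound $2n\exp\{-c_0 n\epsilon^{3N+4}\}$ of \eqref{con3}. On the complementary good event one also has $N_Q\ge n|Q|(\phi_0-C\epsilon^2)\ge 1$ for $\epsilon$ small, so no cube is empty.

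On this good event I would then define $\phi_\epsilon:=c_Q\,\phi$ on each cube $Q$ with the constant $c_Q=\tfrac{N_Q/n}{\mu(Q)}$, so that $\mu_\epsilon(Q)=N_Q/n$ and $\mu_\epsilon(\Omega)=\sum_Q N_Q/n=1$. From the count estimate, $|c_Q-1|=\tfrac{|N_Q/n-\mu(Q)|}{\mu(Q)}\le\tfrac{\phi_1}{\phi_0}C\epsilon^2$, whence $\|\phi_\epsilon-\phi\|_\infty\le|c_Q-1|\phi_1\le C_0\epsilon^2$, giving \eqref{con2}. It remains to split each cube $Q$ (carrying $\mu_\epsilon$-mass $N_Q/n$ and $N_Q$ distinct points) into $N_Q$ measurable cells of equal mass $1/n$, one point apiece. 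Because $\mu_\epsilon$ is non-atomic with density bounded below, I would do this by a soft carving argument: reserve disjoint small balls $B_k$ around the $N_Q$ points with $\mu_\epsilon(B_k)<1/n$, then partition the leftover mass $\mu_\epsilon(Q\setminus\bigcup_k B_k)=\sum_k(1/n-\mu_\epsilon(B_k))$ into pieces of masses $1/n-\mu_\epsilon(B_k)$ and adjoin the $k$-th piece to $B_k$. Each resulting cell has mass exactly $1/n$ and contains exactly its designated point; since there are $n$ cells and $n$ points the pairing is a bijection, and relabelling $\mathcal{U}_i$ as the cell containing $Z_i$ yields \eqref{con1}.

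The main obstacle is the calibration in the probabilistic step: landing exactly on the exponent $3N+4$ forces one to localise Lemma \ref{bern} to the cube scale (a domain of volume $\sim\epsilon^{3N}$) while choosing $\lambda\sim\epsilon^2$ to achieve the $O(\epsilon^2)$ relative mass accuracy demanded by \eqref{con2}; a global application of the same inequality would instead produce a much worse exponent, so the two-scale bookkeeping is essential. A secondary technical point is the treatment of cubes clipped by $\partial\Omega$, whose intersection with $\Omega$ could a priori be a thin sliver carrying too little mass for the concentration bound to bite. Here I would invoke the uniform exterior ball condition (Definition \ref{ueb}): at scales $\epsilon^3\ll\delta$ it keeps $\partial\Omega$ nearly flat and prevents inward cusps, so that each boundary cube containing a data point has $\mu(Q\cap\Omega)\gtrsim\epsilon^{3N}$, keeping both the per-cube estimate and the non-emptiness conclusion uniform; any residual sliver is absorbed into a neighbouring cell, which does not break the $\operatorname{diam}\le\epsilon^3$ bound once the cube side is a fixed small fraction of $\epsilon^3$. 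With these in place, \eqref{con1}–\eqref{con3} all hold on the stated good event.
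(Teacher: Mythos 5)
First, note that the paper itself does not prove this lemma at all: it is imported verbatim from \cite[Lemma 2.1]{ABP24}. Your reconstruction follows the same general strategy as the proof in that reference (a grid at scale $\epsilon^3$, a Bernstein bound per cell with deviation parameter $\lambda\sim\epsilon^2$ producing the exponent $n\epsilon^{3N+4}$, a union bound over $O(\epsilon^{-3N})\lesssim n$ cells, a per-cell rescaling of $\phi$ to define $\phi_\epsilon$, and an equal-mass splitting of each cell among its data points). Your interior analysis is sound: the calibration $|Q|\sim\epsilon^{3N}$, $\lambda\sim\epsilon^2$ is exactly what yields \eqref{con2} and \eqref{con3} simultaneously, and the carving of a cube into $N_Q$ measurable pieces of $\mu_\epsilon$-mass $1/n$, one point apiece, is legitimate by non-atomicity of $\mu_\epsilon$ (Lyapunov-type splitting); since the pieces stay inside $Q$, the inclusion in \eqref{con1} follows. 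One cosmetic repair: you justify $\#\{\text{cubes}\}\le n$ by appealing to hypothesis \eqref{con4} of Theorem \ref{mainthm}, but the lemma is stated free-standing; the correct internal argument is that \eqref{con3} is vacuous unless $c_0 n\epsilon^{3N+4}\ge\log(2n)$, which already forces $\epsilon^{-3N}\le n\epsilon^4/\log(2n)\le Cn$.

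The genuine gap is your treatment of boundary cubes. The claim that the uniform exterior ball condition (Definition \ref{ueb}) gives $\mu(Q\cap\Omega)\gtrsim\epsilon^{3N}$ for boundary cubes containing a data point is false: even a perfectly flat piece of $\partial\Omega$ (a half-space, which satisfies the condition with any $\delta$) can clip an arbitrarily thin corner off a grid cube, so slivers occur for every domain, not just cusped ones. Worse, exterior-ball domains may be thin or pinched (e.g.\ a ball with two internally tangent balls removed, pinching at their tangency point), in which case \emph{entire layers} of cubes near the pinch have $\mu(Q\cap\Omega)\ll\epsilon^{3N}$, and no fat neighbour exists within the diameter budget: merging a sliver only with adjacent slivers cannot restore mass $\gtrsim\epsilon^{3N}$ while keeping $\operatorname{diam}\le\epsilon^3$. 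This matters quantitatively: on a region of mass $\mu\ll\epsilon^{3N}$, your Bernstein bound with $\lambda\sim\epsilon^2$ only gives failure probability $2\exp\{-c\,n\mu\,\epsilon^4\}$, which is \emph{weaker} than the per-cell bound $2\exp\{-c_0 n\epsilon^{3N+4}\}$ needed for \eqref{con3}, and the relative count error $|N_Q/n-\mu(Q)|/\mu(Q)$ is then no longer $O(\epsilon^2)$, breaking \eqref{con2} on exactly those cells. So the merging step cannot be left as a post hoc remark: it must be performed \emph{before} the concentration estimate, and you must prove that the merged regions simultaneously satisfy a mass lower bound $\gtrsim\epsilon^{3N}$ and the diameter bound $\le\epsilon^3$ — which, for domains admitted by Definition \ref{ueb}, your sketch does not establish and which in general requires a different handling of the boundary layer (this is the delicate part of the proof in \cite{ABP24}).
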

It is possible to improve the \eqref{con1}, \eqref{con2} and \eqref{con3}.
According to \cite[Section 9]{ABP24}, for some $a>0$, there holds
\begin{align}\label{con11}
Z_i\in \mathcal{U}_i\subset B_{\epsilon^{3+a}}(Z_i)\quad \text{and}\quad \mu_{\epsilon}(\mathcal{U}_i)=\int_{\mathcal{U}_i} \phi_{\epsilon}(y)dy=\frac{1}{n}    
\end{align}
and
\begin{align}\label{con31}
    \| \phi_{\epsilon}-\phi \|_{\infty}\leq C_0 \epsilon^{2+a}
\end{align}
with a probability of at least
\begin{align}
    1-2n\exp\{ -c_0n\epsilon^{3N+4+(N+2)a} \}
\end{align}
 (see (9.2)-(9.4) therein).

 On the other hand, according to \cite[Definition 2.2]{ABP24}, with the partition $\mathcal{U}_1,\dots,\mathcal{U}_n$ of $\Omega$ in Lemma \ref{two}, it allows us to define a projection from $\Omega$ to $\mathcal{X}_n$. 
 \begin{definition}\label{tmap}
     Let $\mathcal{X}_n=\{Z_1,\dots,Z_n\}\subset \Omega$ and a partition $\mathcal{U}_1,\dots,\mathcal{U}_n$ of $\Omega$. We define the transport map $T_{\epsilon}:\Omega\rightarrow \mathcal{X}_n$
    as the projection map satisfying 
         $T_{\epsilon}(x)=Z_i$ if and only if $x\in \mathcal{U}_i$.
 \end{definition}

We introduce the following Pucci-type operators to deal with the convergence issue of our tug-of-war operator.
\begin{align*}
     \mathcal{L}^+_{\mathcal{X}_{n},\epsilon} u(x)&=\frac{\alpha}{2\epsilon^2}\left\{ 
\max_{Z_i\in B^{\mathcal{X}_n}_{\Lambda\epsilon}(x)}\max_{Z_j\in B^{\mathcal{X}_n}_{\tau\epsilon^2}(2x-Z_i)}[u(Z_i)+u(Z_j)]-2u(x) \right\}\\ & \qquad\qquad +
\frac{\beta}{ card (B_{\epsilon}^{\mathcal{X}_{n} }(x) )} \sum_{Z_i\in B_{\epsilon}^{\mathcal{X}_{n}} (x)} \frac{u(Z_i)-u(x)}{\epsilon^2}
\end{align*}
and
\begin{align*}
     \mathcal{L}^-_{\mathcal{X}_{n},\epsilon} u(x)&=\frac{\alpha}{2\epsilon^2}\left\{ 
\min_{Z_i\in B^{\mathcal{X}_n}_{\Lambda\epsilon}(x)}\min_{Z_j\in B^{\mathcal{X}_n}_{\tau\epsilon^2}(2x-Z_i)}[u(Z_i)+u(Z_j)]-2u(x) \right\}\\ & \qquad\qquad +
\frac{\beta}{ card (B_{\epsilon}^{\mathcal{X}_{n} }(x) )} \sum_{Z_i\in B_{\epsilon}^{\mathcal{X}_{n}} (x)} \frac{u(Z_i)-u(x)}{\epsilon^2}.
\end{align*}
Then it is not difficult to observe
\begin{align}\label{comparison}
\mathcal{L}^-_{\mathcal{X}_n,\epsilon}u\leq \mathcal{L}_{\mathcal{X}_n,\epsilon}u\leq \mathcal{L}^+_{\mathcal{X}_n,\epsilon} u.
\end{align}

 Next, we also present an Ascoli-Arzel\'{a} type lemma, which will be used to get the convergence of the solutions (see \cite[Lemma 4.2]{MPR} for the proof).
\begin{lemma}\label{aa}
    Let $\{ v_{\epsilon}:\overline{\Omega}\rightarrow\mathbb{R}\}_{\epsilon>0 }$ be a sequence of functions such that 
    \par (1) there exists $C_v>0$ so that $|v_{\epsilon}(x)|<C_v$ for every $\epsilon>0$ and every $x\in \overline{\Omega}$.
    \par (2) given $\eta>0$ there are constants $r_0$ and $\epsilon_0$ such that for every $\epsilon<\epsilon_0$ and $x$, $y\in \overline{\Omega}$ with $|x-y|<r_0$, it holds 
    \[
    |v_{\epsilon}(x)-v_{\epsilon}(y)|<\eta.
    \]
Then there a uniformly continuous function $v: \overline{\Omega}\rightarrow \mathbb{R}$ and a subsequence, still denoted by $\{ v_{\epsilon} \}$ such that 
    \begin{center}
        $v_{\epsilon}\rightarrow v$ uniformly in $\overline{\Omega}$ as $\epsilon\rightarrow 0$.
    \end{center}
\end{lemma}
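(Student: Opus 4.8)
The plan is to run the classical Arzelà–Ascoli argument, modified so that the asymptotic (rather than uniform-in-$\epsilon$) equicontinuity in hypothesis (2) is invoked only along a tail of the chosen subsequence. Since $\Omega$ is a bounded domain, $\overline{\Omega}$ is compact, and this compactness will be used twice: once for a diagonal extraction over a countable dense set, and once for a finite covering.

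First I would fix a countable dense set $D=\{x_j\}_{j\ge1}\subset\overline{\Omega}$ and pick any sequence $\epsilon_j\downarrow 0$. By the uniform bound (1) the scalar sequence $\{v_{\epsilon_j}(x_1)\}_j$ lies in $[-C_v,C_v]$, so Bolzano–Weierstrass yields a convergent subsequence; iterating over $x_2,x_3,\dots$ and passing to the diagonal produces a single subsequence, relabelled $\{v_m\}_{m\ge1}$ with parameters $\delta_m\to 0$, for which $v_m(x_j)$ converges for every $j$. Denote the limit by $v(x_j)$.

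Next I would upgrade this pointwise convergence on $D$ to a uniform Cauchy property on all of $\overline{\Omega}$. Given $\eta>0$, hypothesis (2) supplies $r_0,\epsilon_0>0$. By compactness choose finitely many centres $x_{i_1},\dots,x_{i_L}\in D$ so that the balls $B_{r_0}(x_{i_\ell})$ cover $\overline{\Omega}$. For arbitrary $x\in\overline{\Omega}$ pick a centre $x_{i_\ell}$ with $|x-x_{i_\ell}|<r_0$ and insert it:
\[
|v_m(x)-v_{m'}(x)|\le |v_m(x)-v_m(x_{i_\ell})|+|v_m(x_{i_\ell})-v_{m'}(x_{i_\ell})|+|v_{m'}(x_{i_\ell})-v_{m'}(x)|.
\]
Once $m,m'$ are large enough that $\delta_m,\delta_{m'}<\epsilon_0$, the outer two terms are each $<\eta$ by (2); since there are only $L$ centres and $v_m(x_{i_\ell})\to v(x_{i_\ell})$, the middle term is $<\eta$ for all $m,m'$ beyond a threshold independent of $x$. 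Hence $\|v_m-v_{m'}\|_{L^\infty(\overline{\Omega})}<3\eta$ for large $m,m'$, so $\{v_m\}$ is uniformly Cauchy and converges uniformly to some $v:\overline{\Omega}\to\mathbb{R}$. Uniform continuity of $v$ then follows by the same three-term split: given $\eta>0$, take $r_0,\epsilon_0$ from (2) and $m$ so large that $\delta_m<\epsilon_0$ and $\|v_m-v\|_{L^\infty(\overline{\Omega})}<\eta$; then $|x-y|<r_0$ gives $|v(x)-v(y)|<3\eta$.

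The argument is essentially routine, and the only point requiring care is the bookkeeping forced by the asymptotic nature of (2): the equicontinuity estimate is licensed only when the subsequence parameter $\delta_m$ has dropped below the $\eta$-dependent threshold $\epsilon_0$, so both the uniform-Cauchy step and the continuity-of-$v$ step must be phrased along the tail of $\{v_m\}$ rather than for every $m$. This is the main (mild) obstacle; once it is handled, completeness of $\mathbb{R}$ closes the proof.
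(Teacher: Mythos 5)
Your argument is correct and is essentially the classical diagonal-extraction proof that the paper itself outsources to \cite[Lemma 4.2]{MPR}: Bolzano--Weierstrass on a countable dense set, a finite $r_0$-cover of the compact set $\overline{\Omega}$, and the three-term splitting, with the only delicate point---invoking the asymptotic equicontinuity (2) only once the subsequence parameter has dropped below $\epsilon_0$---handled exactly as in that reference. No gaps; nothing further is needed.
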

To apply the above criterion, we need proper regularity results for the solutions. 
The following results provide uniform boundedness, H\"{o}lder regularity and boundary continuity of the solution.
The proof of those lemmas can be found in \cite[Lemma 9.3 - 9.5]{ABP24}.
\begin{lemma}\label{ub}
    Let $\mathcal{X}_n\subset \Omega$ be a random data cloud. Suppose that the events \eqref{con11} and \eqref{con31} hold and that $u\in L^{\infty}(\mathcal{X}_n)$ satisfies 
    \[
    \mathcal{L}^{+}_{\mathcal{X}_n,\epsilon}u\geq -\rho,\mathcal{L}^{-}_{\mathcal{X}_n,\epsilon}u\leq \rho
    \]
    in $\mathcal{X}_n\setminus \mathcal{O}_{\epsilon}$ for some $\rho>0$. Then 
    \begin{align*}
        \| u \|_{L^{\infty}(\mathcal{X}_n)}\leq \| u \|_{L^{\infty}(\mathcal{O}_{\epsilon})} +C_{\Omega}\rho
    \end{align*}
    for some constant $C_{\Omega}>0$ depending exclusively on $\Omega$.
\end{lemma}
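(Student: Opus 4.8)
The plan is to establish a discrete maximum principle for the maximal operator $\mathcal{L}^+_{\mathcal{X}_n,\epsilon}$ by comparison with a fixed barrier, and then apply it to both $u$ and $-u$. First I would record two elementary algebraic facts about the extremal operators. Since the inner maxima in the definition of $\mathcal{L}^+_{\mathcal{X}_n,\epsilon}$ range over index sets determined only by $x,\Lambda,\tau,\epsilon$ (and not by the function), subadditivity of the maximum together with linearity of the averaging term yields
\[
\mathcal{L}^+_{\mathcal{X}_n,\epsilon}(v+w)\le \mathcal{L}^+_{\mathcal{X}_n,\epsilon}v+\mathcal{L}^+_{\mathcal{X}_n,\epsilon}w,\qquad \mathcal{L}^+_{\mathcal{X}_n,\epsilon}(cw)=c\,\mathcal{L}^+_{\mathcal{X}_n,\epsilon}w\ \ (c\ge 0),
\]
and $\mathcal{L}^+_{\mathcal{X}_n,\epsilon}$ annihilates additive constants; moreover $\mathcal{L}^+_{\mathcal{X}_n,\epsilon}(-u)=-\mathcal{L}^-_{\mathcal{X}_n,\epsilon}u$, because replacing $u$ by $-u$ converts the inner minima into maxima and negates the averaging part. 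Hence the two hypotheses $\mathcal{L}^+ u\ge -\rho$ and $\mathcal{L}^- u\le\rho$ say precisely that both $v=u$ and $v=-u$ satisfy $\mathcal{L}^+_{\mathcal{X}_n,\epsilon}v\ge-\rho$ in $\mathcal{X}_n\setminus\mathcal{O}_\epsilon$. It therefore suffices to prove the reduced claim: if $\mathcal{L}^+_{\mathcal{X}_n,\epsilon}v\ge-\rho$ in $\mathcal{X}_n\setminus\mathcal{O}_\epsilon$, then $\sup_{\mathcal{X}_n}v\le \sup_{\mathcal{O}_\epsilon}v+C_\Omega\rho$; applying this to $v=u$ and $v=-u$, and noting $\sup_{\mathcal{O}_\epsilon}(\pm u)\le\|u\|_{L^\infty(\mathcal{O}_\epsilon)}$, gives the stated bound.

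The heart of the matter is the construction of a fixed smooth barrier. As $\Omega$ is bounded, fix $y_0\notin\overline\Omega$ and $R>0$ with $\overline\Omega\subset B_R(y_0)$, and set $w(x)=K(R^2-|x-y_0|^2)$. Then $0\le w\le KR^2=:C_\Omega$ on $\overline\Omega$, $\nabla w(x)=-2K(x-y_0)\neq 0$ on $\overline\Omega$, and $\nabla^2 w=-2K\,\mathrm{Id}$. I would Taylor-expand $\mathcal{L}^+_{\mathcal{X}_n,\epsilon}w$ as in Section 2, using the evenly distributed events \eqref{con11} and \eqref{con31} to control the data-cloud sums and to ensure that the relevant extrema are realised by nearby points. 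Since $w$ is strictly concave, the averaging ($\mathcal{L}^k_2$-type) part produces a strictly negative leading term proportional to $\mathrm{tr}(\nabla^2 w)=-2KN$, while the extremal ($\mathcal{L}^k_\infty$-type) part is nonpositive up to the controlled error, its supremum over $B_{\Lambda\epsilon}(x)$ being attained near $x$. Choosing $K=K(\Omega)$ large and $\epsilon$ small therefore gives $\mathcal{L}^+_{\mathcal{X}_n,\epsilon}w\le-1$ in $\mathcal{X}_n\setminus\mathcal{O}_\epsilon$. This is the step I expect to be the main obstacle, as it requires the asymptotic expansion of the Pucci-type operator $\mathcal{L}^+$ itself (not merely of $\mathcal{L}_{\mathcal{X}_n,\epsilon}$), together with the verification that the errors from the discretisation and from the constraint $Z_j\in B_{\tau\epsilon^2}(2x-Z_i)$ are uniformly small on the good events.

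With the barrier in hand I would run the comparison. Put $M=\sup_{\mathcal{O}_\epsilon}v$ and, for $\sigma>0$, let $\psi=v-M-(\rho+\sigma)w$. Using subadditivity, homogeneity, constant-invariance, and $\mathcal{L}^+ w\le-1$,
\[
\mathcal{L}^+_{\mathcal{X}_n,\epsilon}\psi\ \ge\ \mathcal{L}^+_{\mathcal{X}_n,\epsilon}v+(\rho+\sigma)\ \ge\ -\rho+(\rho+\sigma)\ =\ \sigma\ >\ 0\quad\text{in }\mathcal{X}_n\setminus\mathcal{O}_\epsilon.
\]
Since $\mathcal{X}_n$ is finite, $\psi$ attains its maximum at some $x_0\in\mathcal{X}_n$. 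If $x_0\in\mathcal{X}_n\setminus\mathcal{O}_\epsilon$, then at $x_0$ every neighbour satisfies $\psi(Z_i)\le\psi(x_0)$, so the averaging part of $\mathcal{L}^+\psi(x_0)$ is $\le0$ and the extremal part is $\le0$ as well (in fact $=0$, realised by the admissible choice $Z_i=Z_j=x_0$), whence $\mathcal{L}^+\psi(x_0)\le0$, contradicting $\mathcal{L}^+\psi(x_0)\ge\sigma>0$. Thus the maximum of $\psi$ is attained on $\mathcal{O}_\epsilon$, where $\psi=v-M-(\rho+\sigma)w\le0$ because $v\le M$ and $w\ge0$ there. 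Hence $\psi\le0$ on all of $\mathcal{X}_n$, i.e. $v\le M+(\rho+\sigma)C_\Omega$; letting $\sigma\to0^+$ gives $v\le M+C_\Omega\rho$. Applying this to $v=u$ and $v=-u$ and combining yields $\|u\|_{L^\infty(\mathcal{X}_n)}\le\|u\|_{L^\infty(\mathcal{O}_\epsilon)}+C_\Omega\rho$, as claimed.
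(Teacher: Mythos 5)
Your overall skeleton is sound, and it is in fact the route taken in the reference the paper defers to for this lemma (the paper gives no proof of its own but cites \cite[Lemmas 9.3--9.5]{ABP24}): the identity $\mathcal{L}^+_{\mathcal{X}_n,\epsilon}(-u)=-\mathcal{L}^-_{\mathcal{X}_n,\epsilon}u$, the reduction to a one-sided bound, the subadditivity computation for $\psi=v-M-(\rho+\sigma)w$, and the discrete maximum principle at an interior maximum of $\psi$ are all correct. The genuine gap is the step you yourself flagged: the barrier. The concave quadratic $w(x)=K(R^2-|x-y_0|^2)$ does \emph{not} satisfy $\mathcal{L}^+_{\mathcal{X}_n,\epsilon}w\le-1$ in general, and no choice of $K$ can repair this, because every term of $\mathcal{L}^+_{\mathcal{X}_n,\epsilon}w$ scales linearly in $K$. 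Concretely: (i) under the events \eqref{con11}, \eqref{con31} the averaging part approximates a positive multiple of $\phi^{-2}\mathrm{div}(\phi^2\nabla w)=\Delta w+2\phi^{-1}\nabla\phi\cdot\nabla w$, which for your $w$ equals $-2K\bigl(N+2\phi^{-1}\nabla\phi\cdot(x-y_0)\bigr)$; since $\sup_{x\in\overline\Omega}|x-y_0|\ge \mathrm{diam}(\Omega)/2$ no matter where $y_0$ is placed, a density with large $\|\nabla\phi\|_{\infty}/\phi_0$ makes the drift term beat $-2KN$ and flips the sign. (ii) The extremal part is not ``nonpositive up to controlled error'': the slack $Z_j\in B_{\tau\epsilon^2}(2x-Z_i)$ contributes, after dividing by $2\epsilon^2/\alpha$, a positive term of size up to $\alpha\,\tau\,\mathrm{Lip}(w)\sim \alpha K R\tau$. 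This alone defeats the favourable concavity contribution $-\beta KN/(N+2)$ when $p$ is large, since $\alpha\to1$ while $\beta N/(N+2)=N/(N+p)\to0$. Both obstructions are proportional to $\mathrm{Lip}(w)$, exactly like the favourable term, so ``take $K=K(\Omega)$ large'' buys nothing.

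The fix is a barrier whose ratio of concavity to gradient is tunable, which is precisely what \cite{ABP24} uses and what this paper displays after Lemma \ref{bd}: $h(y)=-(1+|y-\xi|)^{-\sigma}$ with $\xi$ outside $\Omega$ and $\sigma=\sigma(\Omega)$ large. Its radial second derivative is $-\sigma(\sigma+1)(1+t)^{-\sigma-2}$ while its gradient and tangential second derivatives are only of size $\sigma(1+t)^{-\sigma-1}$, so the negative radial term dominates the density drift, the tangential curvature, and the $\tau\epsilon^2$-slack error --- all of which are proportional to the gradient --- once $\sigma$ is chosen large depending on $N$, $p$, $\tau$, $\Lambda$, $\mathrm{diam}(\Omega)$ and $\|\nabla\phi\|_{\infty}/\phi_0$. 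With such a barrier substituted for your quadratic (and $C_\Omega$ read off from its oscillation over $\overline\Omega$), your reduction and comparison steps go through verbatim.
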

\begin{lemma}\label{ac}
    Let $\Omega\subset \mathbb{R}^N$ be a bounded domain satisfying the uniform exterior ball condition and $\mathcal{X}_n\subset \Omega$ be a random data cloud. Suppose that the events \eqref{con11} and \eqref{con31} hold and that $u\in L^{\infty}(\mathcal{X}_n)$ satisfies 
     \begin{align*}
\left\{ \begin{array}{ll}
   \mathcal{L}^+_{\mathcal{X}_{n},\epsilon} u\geq -\rho,\mathcal{L}^-_{\mathcal{X}_{n},\epsilon} u\leq \rho \,\,&\textrm{in} \,\, \mathcal{X}_{n}\setminus \mathcal{O}_{\epsilon},\\
    u=g\quad \quad \quad &\textrm{in}\,\,\mathcal{O}_{\epsilon}, \\
\end{array} \right.
\end{align*}
for some $\rho>0$, where $g\in C(\Gamma_{\epsilon})$. Then for every $Z_i$, $Z_j\in \mathcal{X}_n$, there exists $\gamma\in (0,1]$ such that
\begin{align*}
    |u(Z_i)-u(Z_j)|\leq C(|Z_i-Z_j|^{\gamma}+\epsilon^{\gamma})
\end{align*}
where $C=C(\rho,\| u \|_{L^{\infty}(\mathcal{X}_n)})>0$.
\end{lemma}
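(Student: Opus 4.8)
The plan is to prove the estimate by splitting $\mathcal{X}_n$ into an \emph{interior} regime and a \emph{boundary} regime, establishing a H\"older oscillation bound in each and then combining them. Throughout we work on the event that \eqref{con11} and \eqref{con31} hold, so that the partition $\mathcal{U}_1,\dots,\mathcal{U}_n$ and the density comparison $\|\phi_\epsilon-\phi\|_\infty\le C_0\epsilon^{2+a}$ are available; these are precisely what let the discrete averages in $\mathcal{L}^{\pm}$ be compared with genuine integrals against $\phi$. The uniform bound $\|u\|_{L^\infty(\mathcal{X}_n)}\le\|g\|_{L^\infty(\mathcal{O}_\epsilon)}+C_\Omega\rho$ is already supplied by Lemma \ref{ub}, so every constant below may be taken to depend only on $\rho$ and $\|u\|_{L^\infty(\mathcal{X}_n)}$.

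First I would establish the interior estimate: for $x_0$ with $\mathrm{dist}(x_0,\partial\Omega)\gtrsim R$ and $\epsilon\le r\le R$, one has $\mathrm{osc}\,u\le C\big((r/R)^\gamma+(\epsilon/R)^\gamma\big)$ over $B_r^{\mathcal{X}_n}(x_0)$. This is a discrete Krylov--Safonov type decay of oscillation, following the mechanism of \cite{ABP24}: from the two-sided bounds $\mathcal{L}^+u\ge-\rho$ and $\mathcal{L}^-u\le\rho$ one derives an ABP/measure estimate showing that the superlevel (resp.\ sublevel) sets of a normalized solution occupy a definite proportion of each ball, after which a barrier together with a Calder\'on--Zygmund covering argument yields a fixed oscillation-reduction ratio across dyadic scales down to the mesh scale. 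Iterating this reduction produces the exponent $\gamma$, while the term $\epsilon^\gamma$ records that the iteration cannot continue below $r_n=\sup_{y\in\Omega}\mathrm{dist}(y,\mathcal{X}_n)\lesssim\epsilon$.

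Next I would treat the boundary. Fix $Z_i\in\mathcal{X}_n$ with $d:=\mathrm{dist}(Z_i,\partial\Omega)$ small, let $\xi\in\partial\Omega$ be a nearest point, and let $B_\delta(z)\subset\mathbb{R}^N\setminus\Omega$ be the touching exterior ball guaranteed by Definition \ref{ueb}. The plan is to build an explicit barrier $w(x)=A\big(|x-z|^{-s}-\delta^{-s}\big)$, with $s,A$ chosen so that, using \eqref{con11}--\eqref{con31} to pass from the discrete configurations in $\mathcal{L}^{\pm}$ to the continuous operator, $w$ is a supersolution for the maximal bound (i.e.\ $\mathcal{L}^+_{\mathcal{X}_n,\epsilon}w\le 0$ up to an $O(\rho)$ error) while dominating the modulus of continuity of $g$ on $\mathcal{O}_\epsilon$. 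Comparing $u$ with $g(\xi)\pm\big(w+C\rho\,\omega\big)$ through the maximum principle underlying Lemma \ref{ub} then yields $|u(Z_i)-g(\xi)|\le C(d^\gamma+\epsilon^\gamma)$, and symmetrically with the subsolution barrier.

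Finally I would assemble the global bound. Given $Z_i,Z_j$, I distinguish the cases where both points are interior (distance to $\partial\Omega$ comparable to or exceeding $|Z_i-Z_j|$) and where one or both lie within $O(|Z_i-Z_j|)$ of $\partial\Omega$, chaining the interior oscillation bound with the two boundary estimates through the common value $g(\xi)$; in every case the right-hand side collapses to $C(|Z_i-Z_j|^\gamma+\epsilon^\gamma)$. I expect the main obstacle to be the interior step, namely obtaining the discrete Krylov--Safonov oscillation decay with a discretization error uniformly of order $\epsilon^\gamma$: one must show that on the event \eqref{con11}--\eqref{con31} the extremal operators $\mathcal{L}^{\pm}$ satisfy the measure/ABP estimate with constants independent of $n$ and $\epsilon$, so that the per-scale contraction ratio does not degenerate as the cloud refines. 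Controlling this error with dependence only on $\rho$ and $\|u\|_{L^\infty}$ is the crux; the boundary barrier and the final chaining are comparatively routine once the interior decay is in hand.
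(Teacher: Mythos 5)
Your proposal is correct and follows essentially the same route as the paper, which does not prove Lemma \ref{ac} itself but cites \cite[Lemmas 9.3--9.5]{ABP24}: there the interior H\"older estimate is obtained exactly by the discrete Krylov--Safonov machinery you describe (measure/ABP estimates for the extremal operators $\mathcal{L}^{\pm}$ on the data cloud, barriers, and dyadic oscillation decay terminating at the mesh scale, which produces the $\epsilon^{\gamma}$ term), and the boundary step uses a radial barrier built from the uniform exterior ball condition of the same type as the functions $h_{\xi}$ the paper displays after Lemma \ref{bd}. The only caveat is that your crux --- the scale-uniform measure estimate on the event \eqref{con11}--\eqref{con31} --- is precisely the content of the cited work rather than something you rederive, so your outline is a faithful blueprint of, not an alternative to, the paper's proof.
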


To get the convergence near the boundary, we need the following lemma (for the proof, see \cite[Lemma 9.4]{ABP24}). 
 
\begin{lemma}\label{bd}
    Let $\Omega\subset \mathbb{R}^N$ be a bounded domain satisfying the uniform exterior ball condition and $\mathcal{X}_n\subset \Omega$ be a random data cloud. Suppose that the event \eqref{con11} and \eqref{con31} hold and that $u\in L^{\infty}(\mathcal{X}_n)$ satisfies
     \begin{align*}
\left\{ \begin{array}{ll}
   \mathcal{L}^+_{\mathcal{X}_{n},\epsilon} u\geq -\rho,\mathcal{L}^-_{\mathcal{X}_{n},\epsilon} u\leq \rho \,\,&\textrm{in} \,\, \mathcal{X}_{n}\setminus \mathcal{O}_{\epsilon},\\
    u=g\quad \quad \quad &\textrm{in}\,\,\mathcal{O}_{\epsilon}, \\
\end{array} \right.
\end{align*}
for some $\rho>0$, where $g\in C(\Gamma_{\epsilon_0})$. For every $\eta>0$ there exists $\delta>0$ such that 
\[
|u(Z_i)-g(x)|\leq \eta
\]
for every $Z_i\in \mathcal{X}_n$ and $x\in \partial \Omega$ with $|Z_i-x|<\delta$.
\end{lemma}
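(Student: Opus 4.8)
The plan is to prove this boundary estimate by a barrier argument coupled with the discrete comparison principle that is built into the extremal operators $\mathcal{L}^{\pm}_{\mathcal{X}_n,\epsilon}$. Fix $\eta>0$ and a boundary point $\xi\in\partial\Omega$. By the uniform exterior ball condition (Definition \ref{ueb}) there is a ball $B_\delta(z)\subset\mathbb{R}^N\setminus\Omega$ with $\xi\in\partial B_\delta(z)$, the radius $\delta>0$ being independent of $\xi$. On this exterior ball I would build the radial barrier
\[
w(y)=\delta^{-\sigma}-|y-z|^{-\sigma},\qquad y\in\Omega,
\]
for an exponent $\sigma>0$ to be fixed. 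Since $|y-z|\ge\delta$ throughout $\Omega$, with equality exactly at $\xi$, the function $w$ is nonnegative, vanishes at $\xi$, is smooth away from $z$ (and $z\notin\overline\Omega$), and satisfies $\nabla w\neq0$ on $\overline\Omega$; this last point is essential, since it keeps the infinity-Laplacian part of the operator nondegenerate along the barrier.

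The key computation is that $w$ is a strict classical supersolution of the continuum operator associated with $\mathcal{L}^+$. Writing everything in the radial variable $r=|y-z|$ one has $w'(r)=\sigma r^{-\sigma-1}>0$ and $w''(r)=-\sigma(\sigma+1)r^{-\sigma-2}<0$, so the Hessian of $w$ has the negative radial eigenvalue $w''(r)$ and the positive tangential eigenvalues $w'(r)/r$ of multiplicity $N-1$, while $\Delta_\infty w=w''(r)<0$. Choosing $\sigma$ large (depending only on $N,p,\phi_0,\phi_1$ and $\mathrm{Lip}(\phi)$) forces the negative radial and infinity contributions to dominate the positive tangential term and the lower-order $\nabla(\phi^2)$ contribution, so that the continuum weighted Pucci-type operator applied to $w$ is bounded above by a fixed negative constant $-\kappa<0$, uniformly on $\overline\Omega$. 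I would then upgrade this to the \emph{discrete} bound $\mathcal{L}^+_{\mathcal{X}_n,\epsilon}w\le-\tfrac{\kappa}{2}$ for all small $\epsilon$ on the event \eqref{con11}--\eqref{con31}, by rerunning the asymptotic-expansion arguments of Section 2: Taylor expansion of $w$ together with the Bernstein bound of Lemma \ref{bern} and the well-distribution partition of Lemma \ref{two} for the averaging part, and the covering estimate of Lemma \ref{coninf} for the extremal part. Because $w$ is smooth with nonvanishing gradient and $z$ stays at a fixed positive distance from $\Omega$, these expansions hold uniformly in $\xi\in\partial\Omega$, which is exactly what makes the final $\delta$ in the statement uniform.

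With the barrier in hand I would run the comparison. Put $\Psi=g(\xi)+\tfrac\eta2+C\,w$ with $C>2\rho/\kappa$; since constants are annihilated and $\mathcal{L}^+$ is positively $1$-homogeneous, $\mathcal{L}^+_{\mathcal{X}_n,\epsilon}\Psi=C\,\mathcal{L}^+_{\mathcal{X}_n,\epsilon}w\le-\tfrac{C\kappa}{2}<-\rho$. If $\max_{\mathcal{X}_n}(u-\Psi)>0$ were attained at $Z^\ast$, then at that maximum the difference quotients of $u$ are dominated by those of $\Psi$, and by the monotonicity of $\mathcal{L}^+$ in these quotients (both the averaging sum and the two-step $\max$--$\max$ over $B^{\mathcal{X}_n}_{\Lambda\epsilon}$ and $B^{\mathcal{X}_n}_{\tau\epsilon^2}$ preserve pointwise inequalities) one gets $\mathcal{L}^+_{\mathcal{X}_n,\epsilon}u(Z^\ast)\le\mathcal{L}^+_{\mathcal{X}_n,\epsilon}\Psi(Z^\ast)<-\rho$; if $Z^\ast\in\mathcal{X}_n\setminus\mathcal{O}_{\epsilon}$ this contradicts $\mathcal{L}^+_{\mathcal{X}_n,\epsilon}u\ge-\rho$. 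Hence the maximum lies on $\mathcal{O}_{\epsilon}$, where $u=g$; enlarging $C$ so that $C\,w$ also exceeds the oscillation of $g$ on the part of $\mathcal{O}_{\epsilon}$ bounded away from $\xi$ (using uniform continuity of $g$ on the compact strip together with $w\ge0$ near $\xi$) shows $g\le\Psi$ there, so $u\le\Psi$ on all of $\mathcal{X}_n$. Since $w(\xi)=0$ and $w$ is continuous, for this fixed $C$ there is $\delta'>0$ with $C\,w(Z_i)<\tfrac\eta2$ whenever $|Z_i-\xi|<\delta'$, giving $u(Z_i)\le g(\xi)+\eta$. The matching lower bound follows from the same barrier via the identity $\mathcal{L}^-_{\mathcal{X}_n,\epsilon}(-v)=-\mathcal{L}^+_{\mathcal{X}_n,\epsilon}(v)$, applied to $g(\xi)-\tfrac\eta2-C\,w$ and the hypothesis $\mathcal{L}^-_{\mathcal{X}_n,\epsilon}u\le\rho$; combining the two and replacing $\xi$ by a nearby $x\in\partial\Omega$ (with $|g(x)-g(\xi)|$ small) yields the claim with a uniform $\delta$. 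The main obstacle I expect is the uniform discrete supersolution verification $\mathcal{L}^+_{\mathcal{X}_n,\epsilon}w\le-\kappa/2$: one must make the pointwise consistency of Section 2 quantitative and uniform in the base point, control the two-scale extremization over $B_{\Lambda\epsilon}$ and $B_{\tau\epsilon^2}$, and keep all error terms below the fixed margin $\kappa/2$ throughout the boundary strip on the high-probability event.
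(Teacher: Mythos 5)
Your overall scheme---an exterior-ball barrier of the form $g(\xi)\pm\tfrac{\eta}{2}\pm Cw$ combined with the discrete comparison principle for the monotone, positively $1$-homogeneous operators $\mathcal{L}^{\pm}_{\mathcal{X}_n,\epsilon}$, plus the identity $\mathcal{L}^-_{\mathcal{X}_n,\epsilon}(-v)=-\mathcal{L}^+_{\mathcal{X}_n,\epsilon}(v)$---is exactly the route the paper takes: it defers the proof to \cite[Lemma 9.4]{ABP24} and records the barriers built from $h_{\xi}(x,y)=(1+|x-\xi|)^{-\sigma}-(1+|y-\xi|)^{-\sigma}$ with $\mathcal{L}^+_{\mathcal{X}_n,\epsilon}[h_{\xi}]\leq-\tfrac{\sigma}{2}(R^2+1)^{-\sigma}$. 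Your comparison step at an interior maximum is correct. However, your supersolution verification mis-identifies the operator. The extremal part of $\mathcal{L}^+_{\mathcal{X}_n,\epsilon}$ is not an $\inf+\sup$ aligned with $\nabla w$: it is a two-scale $\max$--$\max$ over nearly antipodal pairs $Z_i\in B^{\mathcal{X}_n}_{\Lambda\epsilon}(x)$, $Z_j\in B^{\mathcal{X}_n}_{\tau\epsilon^2}(2x-Z_i)$, whose second-order contribution is $\tfrac{\alpha}{2}\Lambda^2\max\{0,\lambda_{\max}(D^2w)\}$ plus an \emph{order-one} pairing error of size $\tfrac{\alpha\tau}{2}|\nabla w|$ coming from $|Z_i+Z_j-2x|\leq\tau\epsilon^2$. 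For your radial barrier, $\lambda_{\max}(D^2w)=w'(r)/r=\sigma r^{-\sigma-2}>0$, so there is no helpful ``negative infinity contribution'': $\Delta_\infty w=w''<0$ never enters $\mathcal{L}^+$, and both extremal contributions are adverse and linear in $\sigma$, as are the drift term $2\nabla\log\phi\cdot\nabla w$ and the tangential part of $\Delta w$. The inequality $\mathcal{L}^+w\leq-\kappa$ closes only because the single radial eigenvalue $w''=-\sigma(\sigma+1)r^{-\sigma-2}$ inside the averaging term is \emph{quadratic} in $\sigma$; so $\sigma$ must be chosen large depending also on $\Lambda$ and $\tau$, and that bookkeeping is missing from your sketch. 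Relatedly, on the assumed event \eqref{con11}--\eqref{con31} the lemma is a deterministic statement, so the discrete transfer should go through the partition/denseness estimates (every point of $\Omega$ lies within $\epsilon^{3+a}$ of a data point, together with the (9.5)-type averaging bound), which hold deterministically on that event and uniformly in the base point; invoking Lemma \ref{bern} afresh would introduce exceptional sets not granted by the hypotheses.

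The second genuine hole is the ``enlarge $C$'' step. You need $\inf\{w(y):y\in\overline{\Omega},\ |y-\xi|\geq r_1\}>0$ to absorb the oscillation of $g$ on the far part of $\mathcal{O}_{\epsilon}$, but your $w$ vanishes on the \emph{entire} sphere $\partial B_{\delta}(z)$, and Definition \ref{ueb} does not force $\overline{B_{\delta}(z)}\cap\overline{\Omega}=\{\xi\}$: in an annulus, one exterior ball serves the whole inner boundary component, $w\equiv0$ along it, and $g\leq\Psi$ fails on $\mathcal{O}_{\epsilon}$ wherever $g>g(\xi)+\tfrac{\eta}{2}$, so the maximum of $u-\Psi$ can sit on the boundary strip with positive value and the comparison yields nothing. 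The standard repair: replace $B_{\delta}(z)$ by the half-radius ball $B_{\delta/2}(z')$ with $z'=\tfrac{z+\xi}{2}$, still exterior and tangent at $\xi$; the median formula gives, for $y\in\overline{\Omega}$ (where $|y-z|\geq\delta$),
\begin{equation*}
|y-z'|^2=\tfrac{1}{2}|y-z|^2+\tfrac{1}{2}|y-\xi|^2-\tfrac{\delta^2}{4}\ \geq\ \tfrac{\delta^2}{4}+\tfrac{1}{2}|y-\xi|^2,
\end{equation*}
so the barrier built on $z'$ is uniformly positive away from $\xi$, with constants depending only on $\delta$ and $r_1$ and uniform in $\xi\in\partial\Omega$---which is precisely what the uniform $\delta$ in the statement requires. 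With these two repairs your argument coincides with the one in \cite[Lemma 9.4]{ABP24} that the paper cites.
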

For any $x\in \partial \Omega$, by the uniform exterior ball condition, there exists $\xi\in \mathbb{R}^N\setminus \Omega$ such that $\overline{B}_r (\xi)\cap \overline{\Omega}=\{ x \}$. Looking back at the proof of Lemma \ref{bd} in \cite{ABP24}, the barrier functions $v,w\in L^{\infty}(\mathcal{X}_n)$ are given by:
\begin{align*}
    v(y)=g(x)+\frac{\eta}{2}+\theta \left[ (1+|x-\xi|)^{-\sigma}-(1+|y-\xi|)^{-\sigma} \right]= g(x)+\frac{\eta}{2}+\theta h_{\xi}(x,y),\\
    w(y)=g(x)-\frac{\eta}{2}-\theta \left[ (1+|x-\xi|)^{-\sigma}-(1+|y-\xi|)^{-\sigma} \right]= g(x)-\frac{\eta}{2}-\theta h_{\xi}(x,y),
\end{align*}
where $\theta>0$ and $\sigma=\sigma(\Omega)$ is a constant independent with $\epsilon$. Then for sufficiently large $\theta$,
\begin{align*}
    \mathcal{L}_{\mathcal{X}_n,\epsilon}^+ [v|_{\mathcal{X}_n}]&=\theta \mathcal{L}_{\mathcal{X}_n,\epsilon}^+ [h_{\xi}|_{\mathcal{X}_n}]=-\theta \frac{\sigma}{2}(R^2+1)^{-\sigma}\leq -\rho,\\
     \mathcal{L}_{\mathcal{X}_n,\epsilon}^+ [w|_{\mathcal{X}_n}]&=-\theta \mathcal{L}_{\mathcal{X}_n,\epsilon}^+ [h_{\xi}|_{\mathcal{X}_n}]=\theta \frac{\sigma}{2}(R^2+1)^{-\sigma}\geq \rho,
\end{align*}
where $R=R(\Omega,\xi)$ is a constant independent with $\epsilon$.
So we can find that $\delta$ and $\eta$ in Lemma \ref{bd} do not depend on $\epsilon$, but only depend on $p$, $N$, $\Omega$ and $\rho$. By \eqref{comparison} and Lemma \ref{bd}, we obtain asymptotic boundary continuity of $\{ u_k \}$. 

Now we state the convergence theorem using the transport map in \cite[Theorem 9.7]{ABP24}. This result is essential to derive our main result, which is the inference of Lemma \ref{aa} - \ref{ac}.
\begin{lemma}\label{conv}
    Let $\{ n_k \}$ be a sequence of positive integers and $\{ \epsilon_k \}$ be a sequence of positive real numbers such that 
   \[
   n_k\rightarrow \infty,\epsilon_k\rightarrow 0\quad \text{and}\quad \liminf_{k}(2n_k \text{exp}\{ -c_0n_k\epsilon_k^{3N+4+(N+2)a} \})<1.
    \]
For each $k\in \mathbb{N}$, let $\mathcal{X}^{(k)}_{n_k}\subset \Omega$ be a random data cloud in $\Omega$ and denote $\mathcal{O}_{\epsilon_k}=\mathcal{X}^{(k)}_{n_k}\cap \Gamma_{\epsilon_k}$. Let $u_k\in L^{\infty}(\mathcal{X}^{(k)}_{n_k})$ be such that 
 \begin{align*}
\left\{ \begin{array}{ll}
   \mathcal{L}^+_{\mathcal{X}_{n_k}^{(k)},\epsilon_k} u_k\geq -\rho,\mathcal{L}^-_{\mathcal{X}_{n_k}^{(k)},\epsilon_k} u_k\leq \rho \,\,&\textrm{in} \,\, \mathcal{X}^{(k)}_{n_k}\setminus \mathcal{O}_{\epsilon_k},\\
    u_k=g\quad \quad \quad &\textrm{in}\,\,\mathcal{O}_{\epsilon_k}, \\
\end{array} \right.
\end{align*}
for some positive $\rho>0$. Then with probability $1$, there exists $u:\Omega\rightarrow \mathbb{R}$ and a subsequence still denoted by $u_k$ such that 
\[
u_k\circ T_{\epsilon_k} \rightarrow u\quad \text{uniformly in}\,\,\overline{\Omega}
\]
as $k\rightarrow\infty$.
\end{lemma}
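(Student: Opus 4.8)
The plan is to apply the Ascoli--Arzel\'{a} criterion (Lemma \ref{aa}) to the extended functions $v_k:=u_k\circ T_{\epsilon_k}:\overline{\Omega}\rightarrow\mathbb{R}$, where $T_{\epsilon_k}$ is the transport map of Definition \ref{tmap}. The task then reduces to verifying the two hypotheses of Lemma \ref{aa}, namely a uniform bound $\sup_k\|v_k\|_{L^\infty(\overline{\Omega})}<\infty$ and the asymptotic equicontinuity of $\{v_k\}$. Both will follow from the regularity estimates in Lemmas \ref{ub}--\ref{bd}; these, however, are only valid on the event that the improved partition estimates \eqref{con11} and \eqref{con31} hold. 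Hence the argument splits into a probabilistic step, which extracts a subsequence along which those estimates are available almost surely, and a deterministic step, which transfers the graph-based estimates to $\{v_k\}$ and invokes Lemma \ref{aa}.

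For the probabilistic step I would set $p_k:=2n_k\exp\{-c_0 n_k\epsilon_k^{3N+4+(N+2)a}\}$, the bound on the probability that the good event $G_k$ (that \eqref{con11} and \eqref{con31} hold) fails. Since the data clouds are drawn independently across $k$, the events $\{G_k\}$ are independent. The hypothesis $\liminf_k p_k<1$ furnishes a subsequence $\{k_j\}$ with $p_{k_j}\leq c<1$, so $\mathbb{P}(G_{k_j})\geq 1-c>0$ and $\sum_j\mathbb{P}(G_{k_j})=\infty$. By the second Borel--Cantelli statement in Lemma \ref{boca}, $\mathbb{P}(\limsup_j G_{k_j})=1$; that is, with probability $1$ infinitely many good events occur, yielding a (random) further subsequence, still denoted $\{k\}$, along which \eqref{con11} and \eqref{con31} hold.

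Restricting to this subsequence, I would check the hypotheses of Lemma \ref{aa}. Uniform boundedness comes from Lemma \ref{ub}: the comparison \eqref{comparison} gives $\mathcal{L}^+_{\mathcal{X}_{n_k}^{(k)},\epsilon_k}u_k\geq -\rho$ and $\mathcal{L}^-_{\mathcal{X}_{n_k}^{(k)},\epsilon_k}u_k\leq\rho$, while $u_k=g$ on $\mathcal{O}_{\epsilon_k}\subset\Gamma_{\epsilon_0}$ forces $\|u_k\|_{L^\infty(\mathcal{O}_{\epsilon_k})}\leq\|g\|_{L^\infty(\Gamma_{\epsilon_0})}$, so $\|u_k\|_{L^\infty(\mathcal{X}^{(k)}_{n_k})}\leq\|g\|_{L^\infty(\Gamma_{\epsilon_0})}+C_\Omega\rho$ uniformly in $k$. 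For interior equicontinuity I would use the H\"older estimate of Lemma \ref{ac}: for $x,y\in\overline{\Omega}$ with $T_{\epsilon_k}(x)=Z_i$, $T_{\epsilon_k}(y)=Z_j$, the localization $\mathcal{U}_i\subset B_{\epsilon_k^{3+a}}(Z_i)$ in \eqref{con11} gives $|Z_i-Z_j|\leq|x-y|+2\epsilon_k^{3+a}$, so that
\begin{align*}
|v_k(x)-v_k(y)|=|u_k(Z_i)-u_k(Z_j)|\leq C\big((|x-y|+2\epsilon_k^{3+a})^\gamma+\epsilon_k^\gamma\big),
\end{align*}
which is uniformly small for $|x-y|$ and $\epsilon_k$ small. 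Near $\partial\Omega$ the H\"older estimate degenerates, so there I would instead invoke the boundary continuity of Lemma \ref{bd}, whose modulus $\delta,\eta$ is independent of $\epsilon$ (as verified by the barrier computation preceding that lemma). Combining the interior and boundary cases yields the asymptotic equicontinuity of $\{v_k\}$ on $\overline{\Omega}$, and Lemma \ref{aa} then produces a uniformly continuous limit $u:\overline{\Omega}\rightarrow\mathbb{R}$ with $v_k\to u$ uniformly along a further subsequence; since the entire argument runs on a probability-one event, the claim follows.

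The main obstacle is the probabilistic bookkeeping: one must exploit the independence of the clouds together with the weak hypothesis $\liminf_k p_k<1$ (rather than summability of $p_k$) through the second Borel--Cantelli lemma to secure an almost-sure good subsequence, and then confirm that the constants appearing in Lemmas \ref{ub}--\ref{bd} are genuinely uniform along it. A secondary technical point is the clean transfer of the discrete H\"older bound to the continuous map $v_k$, for which the fine localization $\mathcal{U}_i\subset B_{\epsilon_k^{3+a}}(Z_i)$ of the partition is essential.
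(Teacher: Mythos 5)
Your proposal is correct and takes essentially the same approach as the paper, which establishes this lemma by citing \cite[Theorem 9.7]{ABP24} and noting that it is precisely the inference you spell out: the second Borel--Cantelli statement of Lemma \ref{boca} applied to the good events \eqref{con11}--\eqref{con31} along a subsequence where the failure probability stays below $1$, followed by the Ascoli--Arzel\'a criterion (Lemma \ref{aa}) fed by the uniform bound, H\"older estimate and boundary continuity of Lemmas \ref{ub}--\ref{bd} transferred to $u_k\circ T_{\epsilon_k}$ via the partition. The only cosmetic slip is your appeal to \eqref{comparison} for the Pucci bounds, which is unnecessary here since those bounds are hypotheses of the lemma itself (the comparison is used later, when the lemma is applied to solutions of \eqref{dpp}).
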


Finally, we check the convergence of $\{f_k\circ T_{\epsilon_k}\}$ and $\{u_k\circ T_{\epsilon_k}\}$. By Lemma \ref{aa}, we can find a function $f\in C(\overline{\Omega})$ satisfying 
\[
f_k\circ T_{\epsilon_k} \rightarrow f\textrm{ uniformly in }\overline{\Omega} ,\textrm{ as }k\rightarrow\infty
\]
(the subsequence are still denoted by $\{f_k\circ T_{\epsilon_k}\}$ for convenience).
Then by \eqref{comparison} we directly see that
\begin{align}\label{contr}
\mathcal{L}^+_{\mathcal{X}^{(k)}_{n_k},\epsilon_k} u_k\geq -C_f, \qquad  \mathcal{L}^-_{\mathcal{X}_{n_k,\epsilon_k}^{(k)}}u_k\leq C_f.
\end{align}
Due to lemma \ref{conv}, we can also get the convergence of $\{u_k\circ T_{\epsilon_k}\}$ (we still denote the subsequence by $\{u_k\circ T_{\epsilon_k}\}$). 

\subsection{Viscosity solution}
In the previous subsection, we established the convergence of the solutions to \eqref{gradpp}.
Now we demonstrate that the limiting function is a solution of the model problem in a viscosity sense.

We first recall the notion of a viscosity solution for 
\begin{align}\label{ope}
    \mathrm{div}(\phi^2|\nabla u|^{p-2}\nabla u)=f\text{  in  }\Omega. 
\end{align}
\begin{definition} Let $\Omega \subset \mathbb{R}^n$ and $u\in C(\Omega)$.
  We say $u$ is a viscosity subsolution (supersoslution, respectively) of \eqref{ope} if for every $x_0\in \Omega$ and $\xi\in C^2(\Omega)$ with $\nabla\xi(x_0)\neq 0$ such that $u-\xi$ has a local maximum (local minimum, respectively) at $x_0$ we have
    \[
    \mathrm{div}(\phi^2|\nabla u|^{p-2}\nabla u)\geq f. \qquad (\mathrm{div}(\phi^2|\nabla u|^{p-2}\nabla u)\leq f, \textrm{respectively}.)
    \]
 Moreover, we say that $u\in C(\Omega)$ is a viscosity solution of \eqref{ope} if $u$ is both a viscosity sub- and supersolution of \eqref{ope}.
\end{definition}

We now state the following result, which shows that the limiting function is a viscosity solution of our model problem.
\begin{theorem}\label{vis}
    The limit function $u$ is a viscosity solution to 
        \begin{align*}
\left\{ \begin{array}{ll}
  \frac{N+p}{N+2}\frac{\phi^{-2}}{|\nabla u|^{p-2}}\mathrm{div}(\phi^2|\nabla u|^{p-2}\nabla u)=f \,\,&\text{in} \,\,\Omega,\\
    u=g\quad \quad \quad \quad \qquad \quad&\text{in}\,\,\partial \Omega,\\
\end{array} \right.
\end{align*}
\end{theorem}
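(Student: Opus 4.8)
The plan is to verify the interior viscosity property directly, checking the subsolution and supersolution inequalities separately, and then to record the boundary condition. I would argue on the probability-$1$ event on which both the consistency conclusion of Lemma \ref{six} and the partition estimates \eqref{con11}--\eqref{con31} hold (these are each almost sure under \eqref{con0} by Borel--Cantelli), together with the uniform convergence $u_k\circ T_{\epsilon_k}\to u$ and $f_k\circ T_{\epsilon_k}\to f$ from Lemma \ref{conv} and Lemma \ref{aa}. Fix $x_0\in\Omega$ and a test function $\xi$ with $\nabla\xi(x_0)\neq 0$ such that $u-\xi$ has a local maximum at $x_0$ (the supersolution case is symmetric, with a local minimum). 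Since to check viscosity solutions it suffices to test against smooth functions, I would assume $\xi\in C^\infty(\mathbb{R}^N)$, as required by Lemma \ref{six}; and, after replacing $\xi$ by $\xi(\cdot)+|\cdot-x_0|^4$, I may assume the maximum is strict, which changes neither $\nabla\xi(x_0)$ nor $\nabla^2\xi(x_0)$ and hence leaves the operator value at $x_0$ untouched.

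The core is a touching argument transferred to the graph. Because $u_k\circ T_{\epsilon_k}\to u$ uniformly and the maximum of $u-\xi$ is strict, for large $k$ the function $u_k\circ T_{\epsilon_k}-\xi$ attains a local maximum at some $x_k$ in a fixed small ball $B_r(x_0)$, with $x_k\to x_0$. Writing $Z^k:=T_{\epsilon_k}(x_k)\in\mathcal{X}^{(k)}_{n_k}$, the cell property in \eqref{con11} gives $|x_k-Z^k|\le\epsilon_k^{3+a}$, so $Z^k\to x_0$ as well, and since $x_0\in\Omega$ is interior, $Z^k\in\mathcal{X}^{(k)}_{n_k}\setminus\mathcal{O}_{\epsilon_k}$ for $k$ large. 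Evaluating the maximum inequality at graph points $Z_i$ (for which $T_{\epsilon_k}(Z_i)=Z_i$) and using $|\xi(Z^k)-\xi(x_k)|\le\|\nabla\xi\|_\infty\,\epsilon_k^{3+a}$ yields
\[
u_k(Z_i)-u_k(Z^k)\le \big[\xi(Z_i)-\xi(Z^k)\big]+C\epsilon_k^{3+a}
\]
for every $Z_i\in B_{\epsilon_k}(Z^k)\cap\mathcal{X}^{(k)}_{n_k}$. The operator $\mathcal{L}_{\mathcal{X}^{(k)}_{n_k},\epsilon_k}$ depends on $u_k$ only through the increments $u_k(Z_i)-u_k(Z^k)$ and is monotone in them (the $\inf$, $\sup$ and average are all order-preserving), so this increment bound, divided by the common factor $\epsilon_k^2$ and using $\alpha+\beta=1$, gives
\[
\mathcal{L}_{\mathcal{X}^{(k)}_{n_k},\epsilon_k}u_k(Z^k)\le \mathcal{L}_{\mathcal{X}^{(k)}_{n_k},\epsilon_k}\xi(Z^k)+C\epsilon_k^{1+a}.
\]
By the dynamic programming equation \eqref{dpf}, the left side equals $f_k(Z^k)=(f_k\circ T_{\epsilon_k})(x_k)$.

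It then remains to pass to the limit. Since $Z^k\to x_0$ and $\nabla\xi(x_0)\neq0$, the consistency of Lemma \ref{six} gives $\mathcal{L}_{\mathcal{X}^{(k)}_{n_k},\epsilon_k}\xi(Z^k)\to \tfrac{N+p}{N+2}\phi^{-2}|\nabla\xi|^{2-p}\mathrm{div}(\phi^2|\nabla\xi|^{p-2}\nabla\xi)(x_0)$, while $(f_k\circ T_{\epsilon_k})(x_k)\to f(x_0)$ by the uniform convergence of $f_k\circ T_{\epsilon_k}$ together with $x_k\to x_0$; the error $C\epsilon_k^{1+a}\to0$. Combining these yields
\[
f(x_0)\le \frac{N+p}{N+2}\,\frac{\phi^{-2}}{|\nabla\xi|^{p-2}}\,\mathrm{div}\big(\phi^2|\nabla\xi|^{p-2}\nabla\xi\big)(x_0),
\]
which is exactly the subsolution inequality; the minimum case produces the reverse inequality, so $u$ is a viscosity solution in $\Omega$. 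The boundary condition $u=g$ on $\partial\Omega$ follows from the asymptotic boundary continuity established via Lemma \ref{bd} (with $\delta,\eta$ independent of $\epsilon$) passed through the uniform limit.

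\textbf{Main obstacle.} The delicate point is the mismatch between the continuous maximizer $x_k$ and the graph vertex $Z^k=T_{\epsilon_k}(x_k)$: the touching inequality naturally holds at $x_k$, but the discrete operator must be evaluated at $Z^k$, and the resulting discrepancy is amplified by the $\epsilon_k^{-2}$ scaling. The improved partition scale $\epsilon_k^{3+a}$ from \eqref{con11} is what makes the error $\epsilon_k^{3+a}/\epsilon_k^2=\epsilon_k^{1+a}$ vanish; ensuring that all competitor points $Z_i\in B_{\epsilon_k}(Z^k)$ lie within the fixed neighborhood where the maximum inequality is valid, and that $Z^k$ stays in the interior set $\mathcal{X}^{(k)}_{n_k}\setminus\mathcal{O}_{\epsilon_k}$, are the accompanying technical checks.
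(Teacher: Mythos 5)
Your argument is correct in substance, but it is not the paper's own proof: it is essentially the alternative route that the paper itself sketches in the Remark immediately after Theorem \ref{vis}, following Calder \cite[Theorem 1]{Cal}. You transfer the touching point to the graph, use monotonicity of the discrete operator in the increments to get $\mathcal{L}_{\mathcal{X}^{(k)}_{n_k},\epsilon_k}u_k(Z^k)\le\mathcal{L}_{\mathcal{X}^{(k)}_{n_k},\epsilon_k}\xi(Z^k)+C\epsilon_k^{1+a}$, identify the left-hand side with $f_k\circ T_{\epsilon_k}(x_k)$ via the dynamic programming equation, and pass to the limit by invoking the consistency Lemma \ref{six}. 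The paper's proof instead works with an approximate minimizer (inequality \eqref{yk}, with slack $\epsilon_k^3$), replaces the discrete $\inf$, $\sup$ and average by their continuum counterparts (the ball $\inf/\sup$ and $\fint \xi\circ T_{\epsilon_k}\,d\mu$) using the density of the cloud and the estimate (9.5) of \cite{ABP24}, and then identifies the limit of the resulting continuum expression by a classical Taylor expansion; Lemma \ref{six} is never invoked inside the viscosity argument. The trade-off is real: the paper's route keeps the probabilistic input confined to the single partition event of Lemma \ref{two} together with the convergence of $u_k\circ T_{\epsilon_k}$, both independent of the test function, whereas in your argument the probability-one event of Lemma \ref{six} a priori depends on the pair $(\xi,x_0)$, which is random (chosen after seeing the limit $u$); to make this airtight you should either reduce to a countable dense family of test functions and points, or observe that the consistency estimates behind Lemma \ref{six} are quantitative and uniform over functions with bounded $C^3$-norm. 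On the other hand, your route is shorter and, notably, tracks the inhomogeneity correctly all the way to the limit ($f_k\circ T_{\epsilon_k}(x_k)\to f(x_0)$), which the paper's write-up is sloppy about (it displays the target inequality with right-hand side $0$ and an $O(\epsilon_k)$ where $f_k$ should appear, and its Remark version also assumes $f=0$).

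Two small repairs you should make. First, $u_k\circ T_{\epsilon_k}$ is only piecewise constant, so the local maximum of $u_k\circ T_{\epsilon_k}-\xi$ need not be attained exactly; replace your exact maximizer $x_k$ by an approximate one with slack $o(\epsilon_k^2)$ (as in \eqref{yk}), which propagates through your increment bound without changing the conclusion. Second, the inclusion of all competitors $Z_i\in B_{\epsilon_k}(Z^k)\cap\mathcal{X}^{(k)}_{n_k}$ in the neighborhood where the maximum inequality holds, which you flag as a check, is immediate for large $k$ from $\epsilon_k+\epsilon_k^{3+a}<r/2$ and $x_k\to x_0$.
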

\begin{proof}

Let $\xi\in C^2$ be such that $u-\xi$ has a strict minimum at the point $x_0\in  \Omega$ with $u(x_0)=\xi(x_0)$. We want to show 
\[
 \frac{N+p}{N+2}\frac{\phi^{-2}}{|\nabla \xi|^{p-2}}\mathrm{div}(\phi^2|\nabla \xi|^{p-2}\nabla \xi)|_{x=x_0}\leq 0.
\]
\par By Lemma \ref{two} and \eqref{con11}, $\{ B_{\epsilon^{3+a}} (Z_i) \}_i$ is an open cover of the domain $\Omega$.
Thus, every point in $\Omega$ has one in the data cloud at a distance at most $\epsilon^{3+a}$. 
Since $\Tilde{u}_k=u_k\circ T_{\epsilon_k}$ uniformly converges to $u$ in $\overline{\Omega}$, we can find a sequence of points $x_k\in \mathcal{X}_{n_k}^{(k)}$ such that $x_k\rightarrow x_0$ as $k\rightarrow\infty$ and 
\begin{align}\label{yk}
\Tilde{u}_k(y)-\xi(y)\geq \Tilde{u}_k(x_k)-\xi(x_k)-\epsilon_k^3 \quad \textrm{for any} \ \  y\in\Omega.
\end{align}
\par Using Taylor's expansion to $v\in C^3(\Omega)$, we see that the difference 
\begin{align*}
  \bigg(\inf_{Z_j\in B_{\epsilon}^{\mathcal{X}_{n_k}^{(k)}}(Z_i)} v(Z_j)+\sup_{Z_k\in B_{\epsilon}^{\mathcal{X}_{n_k}^{(k)}}(Z_i)}v(Z_k)\bigg)-\left(\inf_{Z\in B_{\epsilon}(Z_i)} v(Z)+\sup_{Z'\in B_{\epsilon}(Z_i)} v(Z')   \right) 
    \end{align*}
is of order $O(\epsilon^3)$. 
 Since $\mathcal{L}_{\mathcal{X}_{n_k}^{(k)},\epsilon_k} {u}_k(x_k)=f
_k(x_k)$, we can observe the following (see (9.5) in \cite{ABP24}):
\begin{align*}
    \left| \frac{1}{\text{card}(B_{\epsilon}^{\mathcal{X}_n}(T_{\epsilon}(x)))}\sum_{Z_i\in B_{\epsilon}^{\mathcal{X}_n}(T_{\epsilon}(x))}  u(Z_i) - \fint_{B_{\epsilon}(x)} (u\circ T_{\epsilon})d\mu \right|\leq C\| u \|_{L^{\infty}(\mathcal{X}_n)}\epsilon^{2+a},
\end{align*}
and hence, $\xi$ satisfies the inequality
\begin{align*}
O(\epsilon_k) \geq \frac{\alpha}{2\epsilon_k^2}&\left(\inf_{y\in B_{\epsilon_k}(x_k)} \xi(y)+\sup_{z\in B_{\epsilon_k}(x_k)} \xi(z) -2\xi(x_k)  \right) \\&+\frac{\beta}{\epsilon_k^2}\left(\fint_{B_{\epsilon_k}(x_k)} \xi\circ T_{\epsilon_k}d\mu-\xi(x_k)\right),
\end{align*}
which gives us the desired result by taking the limit. Thus, $u$ is a viscosity supersolution. Similarly, we can prove $u$ is also a viscosity subsolution.

\par Finally, we verify the convergence near the boundary.
For any $y\in \partial \Omega$, we can find a random point $Z_y^{(k)}\in \mathcal{X}_{n_k,\epsilon_k}^{(k)}$ such that $\tilde{u}_k(y)=\tilde{u}_k(Z_y^{(k)})$, due to Lemma \ref{two}. Recalling \eqref{con11}, it holds $y\in B_{\epsilon^{3+a}}(Z_y^{(k)})$, so that 
\[
|Z_y^{(k)}-y|\leq 2\epsilon_k^3.
\]
Then we can check that for any $\eta>0$ and $y\in \partial \Omega$, there exists $N\in \mathbb{N}$ such that if $k>N$, then
\begin{align*}
    |\tilde{u}_k(y)-g(y)|<\eta.
\end{align*} 
 By Lemma \ref{bd}, take $k$ large enough such that $2\epsilon_k^3<\delta$, then it holds
\begin{align*}
    |\tilde{u}_k(y)-g(y)|=|\tilde{u}_k(Z_y^{(k)})-g(y)|
    \leq \eta.
\end{align*}
Then we get the convergence on the boundary and we can finish the proof.
\end{proof}
\begin{remark} 
There is another way to get this result, refer to Calder's proof in \cite[Theorem 1]{Cal}. 
Due to the uniform convergence, we can find a sequence of points $x_k\in \mathcal{X}^{(k)}_{n_k}$ such that $x_k\rightarrow x_0$ as $k\rightarrow\infty$ and ${u}_{k}-\xi$ attains its global minimum at $x_k$ i.e. ${u}_k(x)- \xi(x)  \geq u_k(x_k)-\xi(x_k) \quad \textrm{for any} \ \  x\in\mathcal{X}^{(k)}_{n_k}.$
Then we have $0=\mathcal{L}_{\mathcal{X}_{n_k}^{(k)},\epsilon_k} {u}_k(x_k)\geq \mathcal{L}_{\mathcal{X}_{n_k}^{(k)},\epsilon_k} \xi(x_k)$. 
By Lemma \ref{six}, we get
\[
\mathrm{div}(\phi^2|\nabla \xi|^{p-2}\nabla \xi)|_{x=x_0}\leq 0.
\]    
\end{remark}
We directly obtain the following corollary when $f=0$.
\begin{corollary}\label{fk}
Under the assumptions of Theorem \ref{mainthm}, let $u_k\in L^{\infty}(\mathcal{X}_{n_k}^{(k)})$ satisfy 
    \begin{align*}
\left\{ \begin{array}{ll}
   \mathcal{L}_{\mathcal{X}_{n_k}^{(k)},\epsilon_k} u_k=0 \,\,&\textrm{in} \,\, \mathcal{X}^{(k)}_{n_k}\setminus \mathcal{O}_{\epsilon_k},\\
    u_k=g\quad \quad \quad &\textrm{in}\,\,\mathcal{O}_{\epsilon_k}. \\
\end{array} \right.
\end{align*}
Then there exists 
a limit continuous function $u$ is the viscosity solution to 
    \begin{align*}
\left\{ \begin{array}{ll}
  \mathrm{div}(\phi^2|\nabla u|^{p-2}\nabla u)=0 \,\,&\textrm{in} \,\,\Omega,\\
    u=g\quad \quad \quad \quad \qquad \quad&\text{in}\,\,\partial \Omega, \\
\end{array} \right.
\end{align*}
with probability $1$.
\end{corollary}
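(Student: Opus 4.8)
The plan is to specialize Theorem~\ref{mainthm} to the constant choice $f_k\equiv 0$ and then identify the resulting limiting data. First I would check that the hypotheses on the sequence $\{f_k\}$ hold trivially. With $f_k\equiv 0$ we have $f_k\circ T_{\epsilon_k}\equiv 0$ on $\overline{\Omega}$, so assumption $(1)$ is satisfied with any $C_f>0$, and assumption $(2)$ holds for arbitrary $r_0$ and $k_0$ because the tested difference vanishes identically. Thus the full strength of Theorem~\ref{mainthm} is available, and the equation $\mathcal{L}_{\mathcal{X}_{n_k}^{(k)},\epsilon_k}u_k=0$ is precisely \eqref{dpf} with $f_k\equiv 0$.

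Applying Theorem~\ref{mainthm}, with probability $1$ there exist a continuous limit function $u$ and some $f\in C(\overline{\Omega})$ such that $u$ solves \eqref{modeleq} in the viscosity sense. Since $f$ is obtained as the uniform limit of $f_k\circ T_{\epsilon_k}\equiv 0$, necessarily $f\equiv 0$, and the interior equation reduces to
\[
\frac{N+p}{N+2}\frac{\phi^{-2}}{|\nabla u|^{p-2}}\mathrm{div}(\phi^2|\nabla u|^{p-2}\nabla u)=0 \quad\text{in }\Omega,
\]
with the same boundary condition $u=g$ on $\partial\Omega$.

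Finally I would record that this normalized formulation is equivalent to the stated one. In the definition of viscosity solution, every admissible test function $\xi\in C^2$ touching $u$ at an interior point $x_0$ satisfies $\nabla\xi(x_0)\neq 0$; at such a point the prefactor $\tfrac{N+p}{N+2}\phi^{-2}|\nabla\xi|^{2-p}$ is strictly positive and finite, so multiplying through by it leaves the sign of the tested inequality unchanged. Consequently the super- and subsolution inequalities for the normalized operator at $x_0$ coincide with those for $\mathrm{div}(\phi^2|\nabla\xi|^{p-2}\nabla\xi)|_{x=x_0}$, and $u$ is a viscosity solution of $\mathrm{div}(\phi^2|\nabla u|^{p-2}\nabla u)=0$ in $\Omega$ with $u=g$ on $\partial\Omega$. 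No genuine obstacle arises here: the entire content is the reduction to Theorem~\ref{mainthm}, and the only point requiring care is the nonvanishing of the multiplicative factor, which is guaranteed by the gradient condition built into the viscosity test.
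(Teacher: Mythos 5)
Your proposal is correct and follows essentially the same route as the paper, which obtains this corollary directly by specializing Theorem \ref{mainthm} to $f_k\equiv 0$ (so that the limit $f$ of $f_k\circ T_{\epsilon_k}$ vanishes) and using that, for test functions with nonvanishing gradient, the positive factor $\tfrac{N+p}{N+2}\phi^{-2}|\nabla\xi|^{2-p}$ makes the normalized and divergence-form equations equivalent in the viscosity sense when the right-hand side is zero. Your write-up simply makes explicit the verification of hypotheses $(1)$--$(2)$ and the identification $f\equiv 0$, which the paper leaves implicit.
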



\begin{thebibliography}{99}

\bibitem[ABP]{ABP24}
\'A. Arroyo, P. Blanc, and M. Parviainen.
\newblock Krylov--Safonov theory for Pucci-type extremal inequalities on random data clouds.
\newblock \emph{arXiv preprint arXiv:2410.01642}, 2024.

\bibitem[AHP17]{MR3623556}
\'A. Arroyo, J. Heino, and M. Parviainen.
\newblock Tug-of-war games with varying probabilities and the normalized $p(x)$-Laplacian.
\newblock \emph{Communications on Pure and Applied Analysis}, 16(3):915--944, 2017.

\bibitem[BCMR22]{Blanc2021}
P. Blanc, F. Charro, J. J. Manfredi, and J. D. Rossi.
\newblock Asymptotic mean-value formulas for solutions of general second-order elliptic equations.
\newblock \emph{Advanced Nonlinear Studies}, 22(1):118--142, 2022.

\bibitem[Ber05]{Bert2009}
D. P. Bertsekas.
\newblock \emph{Dynamic programming and optimal control. Vol. I}.
\newblock Athena Scientific, Belmont, MA, third edition, 2005.

\bibitem[Cal19]{Cal}
J. Calder.
\newblock The game theoretic $p$-Laplacian and semi-supervised learning with few labels.
\newblock \emph{Nonlinearity}, 32(1):301--330, 2019.

\bibitem[CE22]{calder2022hamilton}
J. Calder and M. Ettehad.
\newblock Hamilton--Jacobi equations on graphs with applications to semi-supervised learning and data depth.
\newblock \emph{Journal of Machine Learning Research}, 23:Paper No. 318, 62, 2022.

\bibitem[CTL22]{Calder22}
J. Calder, N. G. Trillos, and M. Lewicka.
\newblock Lipschitz regularity of graph Laplacians on random data clouds.
\newblock \emph{SIAM Journal on Mathematical Analysis}, 54(1):1169--1222, 2022.

\bibitem[Han22]{MR4299842}
J. Han.
\newblock Time-dependent tug-of-war games and normalized parabolic $p$-Laplace equations.
\newblock \emph{Nonlinear Analysis}, 214:Paper No. 112542, 23, 2022.

\bibitem[KMP12]{MR2875296}
B. Kawohl, J. Manfredi, and M. Parviainen.
\newblock Solutions of nonlinear PDEs in the sense of averages.
\newblock \emph{Journal de Mathématiques Pures et Appliquées}, 97(2):173--188, 2012.

\bibitem[Lew20]{Lewicka2020}
M. Lewicka.
\newblock \emph{A course on tug-of-war games with random noise}.
\newblock Universitext, Springer, Cham, 2020.

\bibitem[MPR10a]{MR2684311}
J. J. Manfredi, M. Parviainen, and J. D. Rossi.
\newblock An asymptotic mean value characterization for a class of nonlinear parabolic equations related to tug-of-war games.
\newblock \emph{SIAM Journal on Mathematical Analysis}, 42(5):2058--2081, 2010.

\bibitem[MPR10b]{MR2566554}
J. J. Manfredi, M. Parviainen, and J. D. Rossi.
\newblock An asymptotic mean value characterization for $p$-harmonic functions.
\newblock \emph{Proceedings of the American Mathematical Society}, 138(3):881--889, 2010.

\bibitem[MPR12a]{MR2887928}
J. J. Manfredi, M. Parviainen, and J. D. Rossi.
\newblock Dynamic programming principle for tug-of-war games with noise.
\newblock \emph{ESAIM Control Optim. Calc. Var.}, 18(1):81--90, 2012.

\bibitem[MPR12b]{MPR}
J. J. Manfredi, M. Parviainen, and J. D. Rossi.
\newblock On the definition and properties of $p$-harmonious functions.
\newblock \emph{Annali della Scuola Normale Superiore di Pisa}, 11(2):215--241, 2012.

\bibitem[Par24]{Par23}
M. Parviainen.
\newblock \emph{Notes on tug-of-war games and the $p$-Laplace equation}.
\newblock SpringerBriefs on PDEs and Data Science, Springer, Singapore, 2024.

\bibitem[PR16]{MR3494400}
M. Parviainen and E. Ruosteenoja.
\newblock Local regularity for time-dependent tug-of-war games with varying probabilities.
\newblock \emph{Journal of Differential Equations}, 261(2):1357--1398, 2016.

\bibitem[PS08]{MR2451291}
Y. Peres and S. Sheffield.
\newblock Tug-of-war with noise: a game-theoretic view of the $p$-Laplacian.
\newblock \emph{Duke Mathematical Journal}, 145(1):91--120, 2008.

\bibitem[PSSW09]{Peres2008}
Y. Peres, O. Schramm, S. Sheffield, and D. B. Wilson.
\newblock Tug-of-war and the infinity Laplacian.
\newblock \emph{Journal of the American Mathematical Society}, 22(1):167--210, 2009.

\bibitem[SB18]{machlear1}
R. S. Sutton and A. G. Barto.
\newblock \emph{Reinforcement learning: an introduction}.
\newblock Adaptive Computation and Machine Learning, MIT Press, Cambridge, MA, second edition, 2018.

\end{thebibliography}
\end{document}